\documentclass{amsart}
\usepackage[utf8]{inputenc}
\usepackage[all]{xy}
\usepackage{upgreek,bbm,bm}
\usepackage{amssymb,amstext,amsmath,amscd,amsthm,hyperref,tikz,tikz-cd,amsfonts,enumerate,graphicx,latexsym,color,geometry}
\newtheorem{thm}{Theorem}[section]
\newtheorem{cor}[thm]{Corollary}
\newtheorem{lem}[thm]{Lemma}
\newtheorem{prop}[thm]{Proposition}

\newtheorem*{thm*}{Theorem}
\newtheorem*{cor*}{Corollary}
\theoremstyle{definition}

\newtheorem{rem}[thm]{Remark}
\newtheorem{ques}[thm]{Question}

\newtheorem*{conj*}{Conjecture}

\newtheorem*{claim*}{Claim}
\newtheorem*{ques*}{Question}
\newtheorem{nota*}[thm]{Notation}

\theoremstyle{remark}
\newtheorem*{ac}{Acknowledgments}
\newtheorem*{conv}{Convention}
\numberwithin{equation}{thm}

\def\Ass{\operatorname{Ass}}

\def\depth{\operatorname{depth}}

\def\Ext{\operatorname{Ext}}
\def\ge{\geqslant}

\def\grade{\operatorname{grade}}
\def\height{\operatorname{ht}}
\def\H{\mathrm{H}}
\def\Hom{\operatorname{Hom}}

\def\id{\operatorname{id}}
\def\im{\mathrm{Im}}
\def\le{\leqslant}
\def\m{\mathfrak{m}}
\def\Min{\operatorname{Min}}

\def\n{\mathfrak{n}}

\def\p{\mathfrak{p}}
\def\pd{\operatorname{pd}}
\def\q{\mathfrak{q}}
\def\rank{\operatorname{rank}}
\def\rhom{\operatorname{\mathbf{R}Hom}}
\def\spec{\operatorname{Spec}}
\def\Supp{\operatorname{Supp}}

\def\Tor{\operatorname{Tor}}
\def\type{\operatorname{type}}

\def\Z{\mathbb{Z}}
\begin{document}
\title{On injective dimension of the conormal module}
\author{Mohsen Gheibi}
\address[Mohsen Gheibi]{Department of Mathematics, Florida A{\&}M University, Tallahassee FL, USA }
\email{mohsen.gheibi@famu.edu}
\author{Kaito Kimura}
\address[Kaito Kimura]{Department of Mathematics, Purdue University, West Lafayette, IN, USA}
\email{m21018b@gmail.com}
\thanks{Kimura was partly supported by JSPS Overseas Research Fellowships}
\subjclass[2020]{13C40, 13D05, 13D07, 13H10}
\keywords{conormal module, finite injective dimension, complete intersection, Vasconcelos' conjecture.}
\begin{abstract} Let $Q$ be a Gorenstein local ring, let $I$ be a proper ideal of $Q$, and set $R=Q/I$. We investigate when finite injective dimension of the conormal module $I/I^2$ forces $I$ to be generated by a regular sequence. We prove this implication for every ideal in the linkage class of a complete intersection. When $Q$ is regular, we also establish it under several numerical conditions in terms of the number of generators of $I$ and the type of $R$. For normal domain quotients, we show that finite injective dimension of $I/I^2$ forces $(\height(I)+1)[\omega_R]=0$ in $\operatorname{Cl}(R)$, yielding the desired conclusion when the divisor class group is torsionfree. In the graded setting, we further prove the implication for all monomial ideals in polynomial rings. We also obtain a converse to Kunz's description of the first Koszul homology of an almost complete intersection.
\end{abstract}
\maketitle
\section{Introduction}
The study of complete intersections through the homological properties of the conormal module has its origins in work of Ferrand and Vasconcelos \cite{F,V}. In 1967, they independently established that an ideal $I$ of finite projective dimension in a commutative Noetherian local ring $Q$ is generated by a regular sequence if and only if its conormal module $I/I^2$ is free over $Q/I$. In 1978, Vasconcelos \cite{V2} proposed that freeness could be weakened to finite projective dimension. His conjecture therefore asserted that
\[
\pd_Q I<\infty
\quad\text{and}\quad
\pd_{Q/I}(I/I^2)<\infty
\]
force $I$ to be generated by a regular sequence. Among the advances toward this conjecture was the work of Avramov and Herzog \cite{AH}, who established it in the graded characteristic-zero setting. The conjecture was ultimately proved in full generality by Briggs \cite{B}, using the homotopy Lie algebra associated to the quotient map.
Motivated by Briggs's theorem, we investigate whether finite injective dimension of the conormal module likewise detects the complete intersection property. More precisely, we consider the following question, which was raised in \cite[Question 5.18]{HMN} and was also considered in \cite[$\S$ 5]{A}.

\medskip
\noindent\textbf{Question.}
Let $Q$ be a regular local ring, let $I$ be a proper ideal of $Q$, and set $R=Q/I$. If $$\id_R(I/I^2)<\infty,$$ must $I$ be generated by a $Q$-regular sequence?
\medskip

The converse holds: if $I$ is generated by a regular sequence, then $R$ is Gorenstein and $I/I^2$ is free over $R$, hence has finite injective dimension. Moreover, for $I\ne0$, finite injective dimension of $I/I^2$ implies that $R$ is Cohen--Macaulay. If one can further show that $R$ is Gorenstein, then the conormal module has finite projective dimension, and Briggs's theorem applies. Thus, the Gorenstein property of the quotient provides a crucial link between the injective-dimension question and Vasconcelos' conjecture. 

We establish an affirmative answer for several classes of ideals. Our principal result concerns ideals in the linkage class of a complete intersection, commonly called \emph{licci} ideals. For these ideals, we can work over a Gorenstein ambient ring.

\medskip
\noindent\textbf{Theorem A.}
\emph{Let $Q$ be a Gorenstein local ring, let $I$ be a licci ideal of $Q$, and set $R=Q/I$. If
\[
\id_R(I/I^2)<\infty,
\]
then $I$ is generated by a $Q$-regular sequence.}
\medskip

Our next results give numerical conditions under which the injective-dimension question has an affirmative answer, without assuming that the ideal is licci.

\medskip
\noindent\textbf{Theorem B.}
\emph{Let $Q$ be a regular local ring, let $I$ be a proper ideal of $Q$, set $R=Q/I$, and suppose that $\id_R(I/I^2)<\infty$. Then $I$ is generated by a regular sequence if any of the following conditions holds:
\begin{enumerate}
\item $\mu(I)\le\height(I)+2$.
\item $R$ is generically Gorenstein and
$\mu(I)\le\height(I)+\type(R)$.
\item $\mu(I)$ is a prime number, and either $Q$ is unramified or $R$ is generically Gorenstein.
\end{enumerate} }
\medskip

The first assertion extends the almost complete intersection case proved in \cite[Theorem 4.8]{GT}. The arguments compare the number of generators and multiplicity of the conormal module with the  type of $R$, using the first Koszul homology to obtain the required bounds. We also apply these ideas to ideals of maximal minors of maximal height.

For normal domains, we obtain a further restriction in terms of the divisor class group. Finite injective dimension of the conormal module forces the canonical class to be torsion in the following precise sense.

\medskip
\noindent\textbf{Theorem C.} \emph{ Let $Q$ be a regular local ring, let $I$ be an ideal of $Q$ of height $g$, and set $R=Q/I$. Suppose that $R$ is a normal domain and $\id_R(I/I^2)<\infty$. Then one has
$$(g+1)[\omega_R]=0\quad\text{in }\operatorname{Cl}(R).$$
In particular, if $\operatorname{Cl}(R)$ is torsionfree, then $I$ is generated by a $Q$-regular sequence.}
\medskip

Theorem C connects the homological behavior of the conormal module with the canonical class of the quotient. When the divisor class group is torsionfree, the resulting relation forces the canonical class to vanish; hence $R$ is Gorenstein, and Briggs's theorem applies. This yields affirmative answers to our question for localizations at the homogeneous maximal ideals of several families of normal graded rings with torsionfree class groups, including generic and ladder determinantal rings, homogeneous coordinate rings of Schubert varieties in Grassmannians, and certain Pfaffian and toric rings; see Remark \ref{R3}.

In the graded setting, we obtain an affirmative answer to the question in the case of monomial ideals.

\medskip
\noindent\textbf{Theorem D.} \emph{ 
Let $Q=k[X_1,\ldots,X_n]$ be a polynomial ring over a field $k$,  let $I$ be a monomial ideal of $Q$, and $R=Q/I$. 
If $\id_R(I/I^2)<\infty$, then $R$ is a complete intersection.}
\medskip

The main idea of the proof is that finite injective dimension, together with Foxby equivalence, yields a tensor decomposition of the conormal module. By keeping track of multidegrees, this decomposition gives a factorization $I=JK$ for some monomial ideals $J$ and $K$.

We also study a related question for the first Koszul homology $\H_1(I)$, computed on a minimal generating set of $I$. For a Cohen--Macaulay almost complete intersection over a Gorenstein local ring, Kunz's results identify $\H_1(I)$ with the canonical module of $R$ \cite{Kunz}. We prove the converse: if $R$ is Cohen--Macaulay and $\H_1(I)\cong\omega_R$, then $I$ is an almost complete intersection. Under additional numerical hypotheses, we obtain the same conclusion from the weaker assumption $\id_R \H_1(I)<\infty$. These results connect the injective homological properties of the conormal module and Koszul homology with complete intersections and almost complete intersections, respectively.

The paper is organized as follows.
In Section 2, we consider the case of licci ideals and prove Theorem A.
In Section 3, we develop the numerical approach leading to Theorem B and study the first Koszul homology.
In Section 4, we treat the normal and graded cases and prove Theorems C and D.

\begin{conv}
    We use the following notation throughout the paper. Let $R$ be a Noetherian ring and let $M$ be a finitely generated $R$-module. We denote by $\mu_R(M)$ and $\ell_R(M)$ the minimal number of generators and the length of $M$, respectively. We write $\pd_R M$ and $\id_R M$ for the projective and injective dimensions of $M$. When $(R,\m,k)$ is a local ring, we set $\type R=\dim_k \Ext_R^{\depth R}(k,R)$ and write $e_R(M)$ for the multiplicity of $M$.
    For an ideal $I$, we denote by $K(I)$ the Koszul complex on a minimal generating set of $I$ and by $\H_i(I)$ its $i$-th homology module. We denote the height of $I$ by $\height(I)$. We omit subscripts and superscripts when the underlying ring is clear.
\end{conv}

\section{Licci ideals}

In this section, we consider the question for licci ideals, which form a natural generalization of complete intersection ideals. Ring-theoretic properties of the corresponding quotient rings are often inherited under linkage, and by combining these properties appropriately, we prove Theorem A. The key ingredient is the following lemma, which deduces the Gorenstein property from the associated primes of a tensor product involving canonical modules.

\begin{lem}\label{Ass tensor}
    Let $(R,\m ,k)$ be a generically Gorenstein Cohen--Macaulay local ring with a canonical module $\omega_R$.
If there exists a nonzero finitely generated $R$-module $M$ such that $\Ass_R(\omega_R\otimes_R\omega_R\otimes_R M)\subseteq\Ass(R)$, then $R$ is Gorenstein.
\end{lem}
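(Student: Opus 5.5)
The plan is to argue by induction on $d=\dim R$, localizing at primes in $\Supp M$, and to reduce to a ``punctured spectrum'' situation where $\omega_R$ is locally free away from the maximal ideal.

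\textbf{Base case and reduction.} If $d=0$, then $\Ass(R)=\Min(R)=\{\m\}$ and generic Gorensteinness says exactly that $R=R_\m$ is Gorenstein. Since $\omega_R$ has full support, $\Supp(\omega_R\otimes\omega_R\otimes M)=\Supp M$. Because $R$ is Cohen--Macaulay, $\Ass(R)=\Min(R)$. The hypothesis localizes: for $\p\in\Supp M$, $(\omega_R)_\p\cong\omega_{R_\p}$, and $\Ass_{R_\p}$ of the localized tensor product lies in $\Min(R_\p)$. Hence by induction $R_\p$ is Gorenstein for every non-maximal $\p\in\Supp M$.

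\textbf{Step 1: transfer the Gorenstein property off $\Supp M$.} Primes outside $\Supp M$ are not controlled by the induction, so this gap must be bridged. I would first try to replace $M$ by a module of full support, for instance $M\oplus(\omega_R\otimes\omega_R\otimes M)^{*}$-type constructions, or pass to a component $V(\q)\subseteq\Supp M$ with $\q\in\Min R$. Alternatively I would argue that the conclusion only needs $\omega_R$ to be free at $\m$, which is a local statement at a point of $\Supp M$. Since $\m\in\Supp M$ as $M\neq0$, the remaining steps only use the primes that the induction reaches.

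\textbf{Step 2: the key depth/rigidity step at $\m$.} We may now assume $d\ge1$ and that $\omega_R$ is free of rank one at all non-maximal primes of $\Supp M$, using generic Gorensteinness and $\omega_R\cong$ an ideal of positive grade. Set $N=\omega_R\otimes\omega_R\otimes M$. Then $\Ass N\subseteq\Min R$ gives $\depth N\ge1$ when $d\ge1$. I would compare $N$ with $\omega_R\otimes M$ and with $M$ using the natural evaluation maps $\omega_R\otimes\Hom(\omega_R,R)\to R$ and $\omega_R\otimes\omega_R\to\omega_R^{(2)}$. Their kernels and cokernels are supported only at $\m$ on $\Supp M$, so they have finite length there. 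A finite-length torsion submodule would then contradict $\depth N\ge1$ unless the relevant Tor-terms vanish. From this vanishing I would extract $\Tor_1^R(\omega_R,\omega_R\otimes M)=0$ with $\omega_R$ locally free on the punctured support. Using Auslander's rigidity/depth-formula argument, which is available since $\omega_R$ has a linear-type structure as a height-one ideal, this would force $\omega_R$ to be free, i.e.\ $R$ Gorenstein.

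\textbf{Expected obstacle.} The hardest part is Step 2. In general, the torsion-freeness of $\omega_R\otimes\omega_R$ alone is not known to imply Gorensteinness; this is essentially the Huneke--Wiegand circle of questions. So the argument must genuinely exploit the extra factor $M$ together with the inductive information on the punctured spectrum. My first attempt would be the exact sequence $0\to\omega_R\to R\to R/\omega_R\to0$ obtained from an embedding of $\omega_R$ as an ideal of height one. I would tensor it with $\omega_R\otimes M$ and track the finite-length torsion, then use $\depth N\ge1$ to kill it, and from there conclude $\type R=1$.
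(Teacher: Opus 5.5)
Your proposal does not yet amount to a proof. Step 1 is a gap you acknowledge yourself, and the decisive Step 2 rests on a tool that is not available.

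After an unspecified torsion-chase that is supposed to yield $\Tor_1^R(\omega_R,\omega_R\otimes_R M)=0$, you invoke ``Auslander's rigidity/depth-formula argument'' to conclude that $\omega_R$ is free. Auslander's and Lichtenbaum's rigidity theorems and the depth formula need finite projective dimension of one of the modules, or a regular or complete intersection base ring. But $\pd_R\omega_R<\infty$ already forces $\omega_R$ to be free by Auslander--Buchsbaum, i.e.\ $R$ to be Gorenstein. There is no ``linear-type structure'' of $\omega_R$ as a height-one ideal that substitutes for this. So you give no mechanism by which the vanishing of one $\Tor_1$ against $\omega_R$ would make $\omega_R$ free; implications of that shape are exactly the hard Auslander--Reiten/Tachikawa-type questions. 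The argument therefore breaks at the step you flagged as hardest.

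The missing idea is that your ``expected obstacle'' is misdiagnosed. Under the generically Gorenstein hypothesis, torsion-freeness of $\omega_R\otimes_R\omega_R$ \emph{does} force $R$ to be Gorenstein; this is the case $M=R$ of the statement, and the extra factor $M$ costs nothing. The paper's argument runs as follows.

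Let $f$ be the map on $N=\omega_R\otimes_R\omega_R\otimes_R M$ that swaps the two $\omega_R$ factors. For $\p\in\Ass R$ one has $(\omega_R)_\p\cong R_\p$, so $f_\p$ is the identity. Thus $\im(f-\id_N)$ is a submodule of $N$, so its associated primes lie in $\Ass R$, yet its support misses $\Ass R$; hence it is zero. Tensoring $f=\id_N$ with $k$ shows that the swap on $(\omega_R/\m\omega_R)\otimes_k(\omega_R/\m\omega_R)\otimes_k(M/\m M)$ is the identity. Since $M/\m M\neq 0$, this forces $\dim_k(\omega_R/\m\omega_R)=1$, so $\omega_R$ is cyclic and $R$ is Gorenstein.

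No induction on dimension is needed, and no rigidity enters. The only primes ever used are those in $\Ass R$, so your Step 1 problem with primes outside $\Supp M$ never arises.
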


\begin{proof}
Put $N=\omega_R\otimes_R\omega_R\otimes_R M$. Consider the map 
$$f: N\to N: x\otimes y\otimes m\mapsto y\otimes x\otimes m$$
that interchanges the two $\omega_R$-components.
Since $R$ is generically Gorenstein, $(\omega_R)_\p\cong R_\p$ for all $\p\in\Ass(R)$. In particular, $f_\p$ is equal to the identity map on $N_\p$. 
This means that $\Supp_R(\im(f-\id_N))\cap \Ass(R)=\emptyset$.
On the other hand, as $\im(f-\id_N)$ is a submodule of $N$, one has $\Ass_R(\im(f-\id_N))\subseteq\Ass_R(N)\subseteq\Ass(R)$ by assumption. By this, $\im(f-\id_N)=0$. Thus $f$ is the identity map, and so is the map
\begin{align}\label{switch}
(\omega_R/\m \omega_R)\otimes_k (\omega_R/\m \omega_R) \otimes_k (M/\m M)&\to (\omega_R/\m \omega_R)\otimes_k (\omega_R/\m \omega_R) \otimes_k (M/\m M) \\ 
\overline{x}\otimes \overline{y}\otimes \overline{m}&\mapsto \overline{y}\otimes \overline{x}\otimes \overline{m} \notag
\end{align}
obtained from $f$ by tensoring with $k$.
If $e_1,\ldots,e_r$ is a $k$-basis of $\omega_R/\m \omega_R$ and $d_1,\ldots,d_s$ is a $k$-basis of $M/\m M$, then $\{e_i\otimes e_j\otimes d_h\mid 1\le i,j\le r, 1\le h\le s\}$ is a $k$-basis of $(\omega_R/\m \omega_R)\otimes_k (\omega_R/\m \omega_R) \otimes_k (M/\m M)$.
As $M$ is nonzero, $s$ is positive.
Since \eqref{switch} is the identity map, $e_i\otimes e_j\otimes d_1=e_j\otimes e_i\otimes d_1$ for all $1\le i,j\le r$, which forces $r=1$.
Hence $\omega_R$ is cyclic, and therefore $R$ is Gorenstein.
\end{proof}

The assumption on the set of associated prime ideals in the lemma is satisfied if $M\otimes_R\omega_R\otimes_R\omega_R$ is maximal Cohen--Macaulay. This is analogous to the approaches to the Tachikawa conjecture taken in \cite{ABS, HH}. In fact, the following result is obtained as a direct consequence of the lemma. For example, taking $M=R=N$ in (1) recovers \cite[Theorem 2.1]{ABS} and \cite[Corollary 2.2]{HH}.

\begin{cor}\label{Tachikawa}
    Let $(R,\m ,k)$ be a generically Gorenstein Cohen--Macaulay local ring with canonical module $\omega_R$ and let $M,N$ be nonzero finitely generated $R$-modules. If one of the following holds, then $R$ is Gorenstein:
\begin{enumerate}[\rm (1)]
\item $\Ext_R^i(M\otimes_R\omega_R, \Hom_R(N,R))=0=\Ext_R^i(N,R)$ for all $1\le i\le \dim R$.
\item $\Ext_R^i(M\otimes_R\omega_R\otimes_R\omega_R, \Hom_R(N,\omega_R))=0=\Ext_R^i(N,\omega_R)$ for all $1\le i\le \dim R$.
\end{enumerate}
\end{cor}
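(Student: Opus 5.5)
The plan is to deduce both parts from Lemma \ref{Ass tensor}. In each case we exhibit a nonzero finitely generated module whose tensor product with $\omega_R\otimes_R\omega_R$ has all of its associated primes in $\Ass(R)$. The tool is the standard Ext-vanishing criterion: over a Cohen--Macaulay local ring $R$ of dimension $d$, if $X$ is a finitely generated module with $\Ext^i_R(X,Y)=0$ for $1\le i\le d$, where $Y$ is maximal Cohen--Macaulay, then $\Hom_R(X,Y)$ is maximal Cohen--Macaulay, or at least satisfies Serre's $(S_1)$. We then combine this with Hom--tensor adjunction, so that the tensor product in the lemma appears inside $\Hom_R(-,R)$ or $\Hom_R(-,\omega_R)$ of something.

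For (1), put $L=M\otimes_R\omega_R$. Since $\Ext^i_R(N,R)=0$ for $1\le i\le d$, applying $\Hom_R(-,R)$ to a free resolution of $N$ shows that $N^*=\Hom_R(N,R)$ is a $d$-th syzygy-like module; more precisely, $N^*$ is maximal Cohen--Macaulay. Indeed, $0\to N^*\to F_0^*\to F_1^*\to\cdots\to F_d^*$ is exact, so depth counting gives $\depth N^*\ge d$. Next, the vanishing of $\Ext^i_R(L,N^*)$ for $1\le i\le d$ shows, by the same argument applied to $\Hom_R(-,N^*)$ of a free resolution of $L$, that $\Hom_R(L,N^*)$ is maximal Cohen--Macaulay. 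It is also nonzero: localize at a minimal prime $\p$, where $\omega_\p\cong R_\p$ and $N^*_\p\ne0$ because $N^*_\p\cong \Hom(N_\p,R_\p)$ and $R_\p$ is Artinian Gorenstein. By adjunction,
\[
\Hom_R(L,N^*)\cong\Hom_R(M\otimes_R\omega_R\otimes_R N,R).
\]
This alone does not give the hypothesis of Lemma \ref{Ass tensor}, so I would instead pass through duality with $\omega_R$. Since $\Hom_R(L,N^*)$ is maximal Cohen--Macaulay, its $\omega$-dual $\Hom_R(\Hom_R(L,N^*),\omega_R)$ is maximal Cohen--Macaulay as well. The goal is to identify a module of the form $\omega_R\otimes\omega_R\otimes M'$ that embeds into some maximal Cohen--Macaulay module, for instance into a module of the shape $\Hom_R(-,\omega_R)$; such a module automatically has associated primes in $\Ass(R)$.

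A cleaner route for (2) is the following. The vanishing of $\Ext^i_R(N,\omega_R)$ for $1\le i\le d$ shows that $N^\dagger=\Hom_R(N,\omega_R)$ is maximal Cohen--Macaulay, by the same depth count. The vanishing of $\Ext^i_R(M\otimes\omega\otimes\omega,N^\dagger)$ then shows that
\[
H=\Hom_R(M\otimes_R\omega_R\otimes_R\omega_R,N^\dagger)\cong\Hom_R(M\otimes_R\omega_R\otimes_R\omega_R\otimes_R N,\omega_R)
\]
is maximal Cohen--Macaulay. Any $\Hom(X,\omega)$ already has associated primes in $\Ass(R)$. The real task is to get the tensor product itself, not its dual, to have this property. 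To do so, I would apply the lemma to a module $M'$ chosen so that $\omega\otimes\omega\otimes M'$ is a submodule of a maximal Cohen--Macaulay module. The natural map $X\to\Hom(\Hom(X,\omega),\omega)$ has kernel supported away from $\Ass(R)$, since $\omega_\p\cong R_\p$ and $R_\p$ is Gorenstein Artinian at those primes.

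The main obstacle is exactly this gap between ``the dual is MCM'' and ``the tensor product has associated primes in $\Ass(R)$.'' I would try to close it using the $\Ext$-vanishing in degrees $1$ and $2$ to show that this bidual map is injective. This is the standard argument that $\Ext^1,\Ext^2$ vanishing against the dual forces torsionlessness, here in the $\omega$-version. Once injectivity holds, $\omega\otimes\omega\otimes(M\otimes N)$ embeds into a maximal Cohen--Macaulay module, and Lemma \ref{Ass tensor} applies with the nonzero module $M\otimes_R N$. Case (1) reduces to case (2) after noting that $\Hom(N,R)\otimes\omega\cong\Hom(N,\omega)$ when $\Ext^{\ge1}(N,R)$ vanishes in the relevant range.
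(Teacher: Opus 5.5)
Your reduction to Lemma \ref{Ass tensor} is the right one, but the step you single out as the main obstacle is the whole content of the corollary, and you do not prove it. Put $X=M\otimes_R N\otimes_R\omega_R\otimes_R\omega_R$. Since $R$ is generically Gorenstein, injectivity of $X\to\Hom_R(\Hom_R(X,\omega_R),\omega_R)$ is \emph{equivalent} to $\Ass_R(X)\subseteq\Ass(R)$, so recasting the goal this way is not yet progress.

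The mechanism you propose also fails. The standard torsionlessness criterion concerns $\Ext$ of a transpose of $X$ itself, not $\Ext^i_R(M\otimes_R\omega_R\otimes_R\omega_R,N^\dagger)$. Moreover, vanishing of the latter only for $i=1,2$ is insufficient when $\dim R\ge 3$. Over a three-dimensional regular local ring take $N=R$ and $M=R\oplus k$. Then $\Ext^i_R(M\otimes_R\omega_R\otimes_R\omega_R,\Hom_R(N,\omega_R))=\Ext^i_R(k,R)=0$ for $i=1,2$, yet $X=R\oplus k$ has $\m$ as an associated prime.

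Your reduction of (1) to (2) is also unsupported. The identification $\Hom_R(N,R)\otimes_R\omega_R\cong\Hom_R(N,\omega_R)$ is not justified. Even granting it, nothing in (1) yields $\Ext^i_R(N,\omega_R)=0$, or turns vanishing of $\Ext^i_R(M\otimes_R\omega_R,\Hom_R(N,R))$ into vanishing of $\Ext^i_R(M\otimes_R\omega_R\otimes_R\omega_R,\Hom_R(N,\omega_R))$.

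What is missing is the fact the paper imports as \cite[Lemma 3.4(1)]{LM}. It uses the full range $1\le i\le d$ to show that the \emph{tensor product} itself is maximal Cohen--Macaulay: if $P$ is maximal Cohen--Macaulay, $P^\dagger=\Hom_R(P,\omega_R)$, and $\Ext^i_R(Y,P^\dagger)=0$ for $1\le i\le d$, then $Y\otimes_R P\cong\Hom_R(\Hom_R(Y,P^\dagger),\omega_R)$ is maximal Cohen--Macaulay.

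To see this, take a free resolution $F_\bullet\to Y$ and set $C^j=\Hom_R(F_j,P^\dagger)$. The complex $C^\bullet$ is exact at $C^1,\dots,C^d$. Run the depth lemma downward from the image $B^{d+1}\subseteq C^{d+1}$, which has positive depth. This shows that $\Hom_R(Y,P^\dagger)$ and the images $B^1\subseteq C^1$ and $B^2\subseteq C^2$ are maximal Cohen--Macaulay. Hence $\Hom_R(-,\omega_R)$ keeps $0\to\Hom_R(Y,P^\dagger)\to C^0\to C^1\to B^2\to 0$ exact. Since $\Hom_R(C^j,\omega_R)\cong F_j\otimes_R P$, the claim follows. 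Without vanishing in degrees $\ge 3$, $B^2$ need not be maximal Cohen--Macaulay. Then $\Ext^1_R(B^2,\omega_R)$, which is the kernel of the bidual map once $\Ext^1$ vanishes, can be nonzero; this is what happens in the example.

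For (2), apply the fact twice:
\begin{enumerate}[\rm (i)]
\item with $P=R$ and $Y=N$, to get that $N$ itself is maximal Cohen--Macaulay (you only showed this for $N^\dagger$, but taking $P=N$ requires it);
\item with $P=N$ and $Y=M\otimes_R\omega_R\otimes_R\omega_R$.
\end{enumerate}
For (1), also apply it twice:
\begin{enumerate}[\rm (i)]
\item with $P=\omega_R$ and $Y=N$, so that $N\otimes_R\omega_R$ is maximal Cohen--Macaulay with $\Hom_R(N\otimes_R\omega_R,\omega_R)\cong\Hom_R(N,R)$;
\item with $P=N\otimes_R\omega_R$ and $Y=M\otimes_R\omega_R$.
\end{enumerate}
In both cases $M\otimes_R N\otimes_R\omega_R\otimes_R\omega_R$ is maximal Cohen--Macaulay. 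So its associated primes lie in $\Ass(R)$, and Lemma \ref{Ass tensor} applies to the nonzero module $M\otimes_R N$.
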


\begin{proof}
In both cases (1) and (2), applying \cite[Lemma 3.4(1)]{LM} twice shows that $L=M\otimes_R N\otimes_R\omega_R\otimes_R\omega_R$ is maximal Cohen--Macaulay.
Indeed, for (1), $N\otimes_R\omega_R$ is maximal Cohen--Macaulay since $\Ext_R^i(N,R)=0$ for all $1\le i\le \dim R$. As $\Hom_R(N\otimes_R\omega_R,\omega_R)\cong \Hom_R(N,R)$, $L$ is maximal Cohen--Macaulay by $\Ext_R^i(M\otimes_R\omega_R, \Hom_R(N,R))=0$ for all $1\le i\le \dim R$.
One has $\Ass_R(L)\subseteq\Ass(R)$. Lemma \ref{Ass tensor} shows that $R$ is Gorenstein.
The proof of (2) is similar.
\end{proof}

Ideals $I$ and $J$ of a Noetherian ring are said to be \textit{linked} if there exists a regular sequence $\boldsymbol{x}$ contained in both $I$ and $J$ such that $I=(\boldsymbol{x}):J$ and $J=(\boldsymbol{x}):I$. An ideal is called \textit{licci} if it can be linked to a complete intersection ideal in finitely many steps.
The following theorem asserts that, over a Gorenstein local ring, a licci ideal whose conormal module has finite injective dimension must be a complete intersection.

\begin{thm}
    Let $Q$ be a Gorenstein local ring, let $I$ be a licci ideal of $Q$, and $R=Q/I$. If $\id_R(I/I^2)<\infty$, then $I$ is generated by a $Q$-regular sequence. In particular, if $Q$ is regular, then $R$ is a complete intersection.
\end{thm}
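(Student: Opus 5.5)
The strategy is to show that $R$ is Gorenstein. Then $\id_R(I/I^2)<\infty$ forces $\pd_R(I/I^2)<\infty$. Since $Q$ is Gorenstein and $I$ is licci, $R$ is perfect over $Q$, so $\pd_Q I<\infty$. Briggs's theorem \cite{B} (the Vasconcelos conjecture) then says that $I$ is generated by a $Q$-regular sequence. The case $I=0$ is trivial, so assume $I\neq 0$.

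\emph{Step 1 (preliminaries).} I will first record the standard consequences of linkage over a Gorenstein ring. Since $I$ is licci, $R$ is Cohen--Macaulay, being linked in finitely many steps to a complete intersection. In fact $R$ is also generically a complete intersection: licci ideals are known to be generically complete intersections (Huneke--Ulrich), and in any case $R$ is generically Gorenstein. I will also use that $R$ has a canonical module $\omega_R\cong \Ext^g_Q(R,Q)$, where $g=\height I$. The key structural input is Buchweitz's result for licci ideals (more generally, for strongly nonobstructed or licci ideals) that the canonical module and the conormal module are related. Concretely, for licci $I$ over Gorenstein $Q$, the modules $I/I^2$, $\omega_R$ and $I/I^2\otimes_R\omega_R$ are maximal Cohen--Macaulay, or at least torsionfree/CM to the extent needed. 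I would cite Buchweitz/Huneke--Ulrich here: for licci ideals, $\H_1(I)\cong \omega_R\otimes$ data, and the conormal module is Cohen--Macaulay.

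\emph{Step 2 (using finite injective dimension).} Since $\id_R(I/I^2)<\infty$ and $I/I^2$ is maximal Cohen--Macaulay (it has depth $\dim R$ because $R$ is CM and $\id<\infty$ with nonzero module), $I/I^2\cong \omega_R^{\oplus n}$ by the classical characterization of MCM modules of finite injective dimension. Alternatively, Foxby equivalence gives $I/I^2\cong \omega_R\otimes_R P$ with $\pd_R P<\infty$. With $P$ maximal Cohen--Macaulay this becomes $P$ free, so $I/I^2\cong \omega_R^{n}$ with $n=\mu(I)$.

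\emph{Step 3 (applying Lemma~\ref{Ass tensor}).} I aim to find a nonzero $M$ such that $\omega_R\otimes\omega_R\otimes M$ has associated primes in $\Ass R$. The natural choice is $M=\omega_R^{*}$ or $M=R$, using the licci property. For licci ideals (Huneke--Ulrich, Buchweitz), $\omega_R\otimes_R I/I^2$ is torsionfree; indeed $\H_1(I)$ is CM and there is an exact sequence $0\to \H_1(I)\to R^n\otimes$ conormal relations. With $I/I^2\cong\omega_R^n$, this gives that $\omega_R\otimes\omega_R$ is torsionfree, so $\Ass(\omega_R\otimes\omega_R)\subseteq\Ass R$. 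Lemma~\ref{Ass tensor} with $M=R$ then yields that $R$ is Gorenstein. If $Q$ is regular, $R=Q/(\text{regular sequence})$ is a complete intersection.

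The main obstacle is Step 3: pinning down exactly which licci-invariant depth statement gives torsionfreeness of $I/I^2\otimes_R\omega_R$. My first attempt will use Buchweitz's theorem that for licci ideals in Gorenstein rings, $I/I^2\otimes_R\omega_R\cong (I/I^2)^{\vee}$-type dualities hold and $\Ext^1_R(I/I^2,\omega_R)$-vanishing/depth statements are preserved under linkage. Checking this on a complete intersection and propagating along the linkage chain is the delicate point.
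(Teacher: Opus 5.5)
Your overall plan is the one the paper uses outside the Artinian case: show $R$ is Gorenstein via Lemma~\ref{Ass tensor} together with a linkage-invariant Cohen--Macaulayness of $I\otimes_Q\omega_R$, then invoke Briggs. There is, however, a genuine gap at the hypothesis that $R$ is generically Gorenstein, which Lemma~\ref{Ass tensor} requires.

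Your justification is false: licci ideals need not be generically complete intersections, nor even generically Gorenstein. For example, $(x,y)^2\subseteq k[[x,y]]$ is licci, as is every height-two perfect ideal of a regular local ring, yet $R$ is Artinian of type $2$. Generic Gorensteinness has to be extracted from $\id_R(I/I^2)<\infty$. Localizing at a minimal prime of $I$ preserves all the hypotheses, so what you actually need is the theorem when $R$ is Artinian. There your tools give nothing. Every $R$-module is maximal Cohen--Macaulay, so the Buchweitz-type statement about $I\otimes_Q\omega_R$ is vacuous, and applying Lemma~\ref{Ass tensor} to $R$ itself is circular.

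The paper handles this case with a separate deformation argument.
\begin{itemize}
\item After passing to $Q[X]_{\n Q[X]}$ to get an infinite residue field, one has $I/I^2\cong\omega_R^{\oplus a}$.
\item The linkage invariant of \cite[Proposition 5.4]{HIW} gives $q\ell(R)=a\,\ell(\omega_R\otimes_R\omega_R)$, where $q=\dim Q$.
\item By \cite[Lemma 4.7]{JM}, $I$ is a specialization of a generically complete intersection ideal $K$ in a Gorenstein ring $B$, with $A=B/K$ and $R=A/\boldsymbol{x}A$.
\item Serre's formula yields $a\ell(R)=\ell(I/I^2)\ge e(\boldsymbol{x},K/K^2)=q\ell(R)$ and $\ell(\omega_R\otimes_R\omega_R)\ge e(\boldsymbol{x},\omega_A\otimes_A\omega_A)=\ell(R)$.
\item The chain $q\ell(R)\ge a\ell(R)\ge q\ell(R)$ is therefore an equality. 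Hence $\omega_A\otimes_A\omega_A$ is maximal Cohen--Macaulay, and Lemma~\ref{Ass tensor} applies to the generically Gorenstein ring $A$.
\end{itemize}
Some argument of this kind is the missing idea.

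Step 2 is also unjustified. A finitely generated module of finite injective dimension over a Cohen--Macaulay ring need not be maximal Cohen--Macaulay; take $\omega_R/x\omega_R$ for a parameter $x$. Linkage theory gives Cohen--Macaulayness of $(I/I^2)\otimes_R\omega_R$, not of $I/I^2$. So outside the Artinian case you cannot write $I/I^2\cong\omega_R^{\oplus n}$, and even when you can, $\mu(I)=n\,\type(R)$, not $n$.

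This part is easy to repair, and the repair is exactly the paper's general step. Foxby equivalence gives $I/I^2\cong\omega_R\otimes_R P$ with $P=\Hom_R(\omega_R,I/I^2)$. Hence $I\otimes_Q\omega_R\cong P\otimes_R\omega_R\otimes_R\omega_R$, which is Cohen--Macaulay by the linkage invariance \cite[Theorem 2.4]{HBI} starting from a complete intersection. Then apply Lemma~\ref{Ass tensor} with $M=P$ rather than $M=R$. Once generic Gorensteinness is supplied by the Artinian case, this completes the proof.
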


\begin{proof}
First, we assume that $R$ is Artinian and has an infinite residue field.
By assumption, $I$ is in the linkage class of a complete intersection ideal $J$ of $Q$ of grade $q=\dim Q$.
By \cite[Theorem 2.2]{HBI}, $I$ is perfect of grade $q$.
As $R$ is Artinian, $C:=I/I^2\cong \omega_R^{\oplus a}$ for some $a>0$, where $\omega_R$ is the canonical module of $R$.
It follows from \cite[Proposition 5.4]{HIW} that 
$$
a\ell(\omega_R\otimes_R \omega_R)-q\ell(R)=\ell(C\otimes_R \omega_R)-q\ell(R)=\ell(J/J^2\otimes_{Q/J} \omega_{Q/J})-q\ell(Q/J)=0
$$
since $J$ is a complete intersection ideal, and hence $\omega_{Q/J}\cong Q/J$ and $J/J^2\cong (Q/J)^{\oplus q}$.
We obtain 
\begin{equation}\label{licci eq1}
q\ell(R)=a\ell(\omega_R\otimes_R \omega_R)
\end{equation}
Since the residue field is infinite and $I$ is licci, \cite[Lemma 4.7]{JM} shows that there exist a Gorenstein local ring $B$, an ideal $K$ of $B$, and a regular sequence $\boldsymbol{x}=x_1, \ldots,x_s$ on both $B$ and $A=B/K$ such that $B/\boldsymbol{x}B\cong Q$, $A/\boldsymbol{x}A\cong R$, and $K$ is generically a complete intersection in $B$.
In particular, $(\boldsymbol{x}B+K)/\boldsymbol{x}B\cong I$ under $B/\boldsymbol{x}B\cong Q$.
Let $D=K/K^2$ and $\omega_A$ be a canonical module of $A$.
As $\boldsymbol{x}$ is a $B/K$-regular sequence, one has $\boldsymbol{x}B\cap K=\boldsymbol{x}K$, and hence 
$$
C=I/I^2\cong \frac{\boldsymbol{x}B+K}{\boldsymbol{x}B+K^2}\cong \frac{K}{K\cap(\boldsymbol{x}B+K^2)}=\frac{K}{(K\cap\boldsymbol{x}B)+K^2}=\frac{K}{\boldsymbol{x}K+K^2}\cong D/\boldsymbol{x}D.
$$
In what follows, we will use without further comment the invariants appearing in \cite[Section 4.7]{BH}.
The sequence $\boldsymbol{x}$ is a system of parameters of $D$.
It follows from \cite[Theorems 4.7.6 and 4.7.10]{BH} that 
\begin{equation}\label{licci eq2}
a\ell(R)=\ell(C)=\ell(D/\boldsymbol{x}D)\ge \chi(\boldsymbol{x}, D)=e(\boldsymbol{x}, D)
\end{equation}
Also, $\boldsymbol{x}$ is an $A$-regular sequence, and one has $\omega_R=\omega_A/\boldsymbol{x}\omega_A$.
Since $\height(K)=\dim B-\dim A=\dim Q-\dim R=q$ and $K$ is generically a complete intersection, we have $\rank_A(D)=q$ and $\rank_A(\omega_A\otimes_A \omega_A)=1$.
By \cite[Corollary 4.7.9 and Theorems 4.7.6 and 4.7.10]{BH}, the following equalities hold:
\begin{align}
&e(\boldsymbol{x}, D)=\rank_A(D)e(\boldsymbol{x}, A)=qe(\boldsymbol{x}, A)=q\chi(\boldsymbol{x}, A)=q\ell(A/\boldsymbol{x}A)=q\ell(R), \label{licci eq3} \\
&\ell(\omega_R\otimes_R \omega_R)\ge \chi(\boldsymbol{x}, \omega_A\otimes_A \omega_A)=e(\boldsymbol{x}, \omega_A\otimes_A \omega_A)=\rank_A(\omega_A\otimes_A \omega_A)e(\boldsymbol{x}, A)=e(\boldsymbol{x}, A)=\ell(R). \label{licci eq4}
\end{align}
Combining \eqref{licci eq1}, \eqref{licci eq2}, \eqref{licci eq3}, and \eqref{licci eq4}, we obtain
$$
q\ell(R)=a\ell(\omega_R\otimes_R \omega_R)\ge a\ell(R)\ge e(\boldsymbol{x}, D)=q\ell(R).
$$
Hence all inequalities are equalities. In particular, equality holds in \eqref{licci eq4}, and thus $\omega_A\otimes_A\omega_A$ is maximal Cohen--Macaulay by \cite[Theorem 4.7.10]{BH}.
Since $\Ass_A(\omega_A\otimes_A\omega_A)=\Ass(A)$, Lemma \ref{Ass tensor} states that $A$ is Gorenstein, and so is $R$. We see that $\pd_R(C)<\infty$, and $I$ is a complete intersection by \cite[Theorem 3.1]{B}. 

Next, we consider the case where $R$ is an arbitrary Artinian ring.
Let $\n$ be the maximal ideal of $Q$.
The ring $S=Q[X]_{\n Q[X]}$ is a Gorenstein local ring with infinite residue field, and $Q\to S$ is flat and local and its closed fiber $S/\n S$ is a field.
Then $R\to R\otimes_Q S=S/IS$ is a flat local ring homomorphism with closed fiber $S/\n S$.
Since $I$ is licci, so is $IS$ by \cite[Theorem 2.6(d)]{JM}.
One has $I/I^2\otimes_R (S/IS)\cong IS/(IS)^2$ and $I/I^2$ is a finite direct sum of copies of the canonical module of $R$. 
By \cite[Theorem 3.3.14]{BH}, $IS/(IS)^2$ is a finite direct sum of copies of the canonical module of $S/IS$, and hence it has finite injective dimension. 
By the previous paragraph, we have $\mu_Q(I)=\mu_S(IS)=\grade_S(IS)=\grade_Q(I)$.

Finally, we deal with the general case.
The condition $\id_R(I/I^2)<\infty$ shows that $R$ is Cohen--Macaulay.
Upon localization at a minimal prime of $I$, the Gorensteiness of $Q$, the licci property of $I$, and the finiteness of the injective dimension of $I/I^2$ are preserved; see \cite[Theorem 2.6(d)]{JM}. Hence, by the Artinian case, we see that $I$ is generically a complete intersection.
Now $I$ is in the linkage class of a complete intersection ideal $J$ of $Q$ and it is perfect by \cite[Theorem 2.2]{HBI}.
One has $I/I^2\cong\Hom_R(\omega_R,I/I^2)\otimes_R \omega_R$ as $I/I^2$ has finite injective dimension.
Since $J\otimes_Q \omega_{Q/J}\cong (J/J^2)\otimes_{Q/J} \omega_{Q/J}\cong (Q/J)^{\oplus \height(J)}$ is Cohen--Macaulay, so is 
$$
I\otimes_Q \omega_R\cong (I/I^2)\otimes_R \omega_R \cong \Hom_R(\omega_R,I/I^2)\otimes_R \omega_R\otimes_R \omega_R
$$ 
by \cite[Theorem 2.4]{HBI}.
By this, we have $\Ass_R(\Hom_R(\omega_R,I/I^2)\otimes_R \omega_R\otimes_R \omega_R)=\Ass(R)$. 
Lemma \ref{Ass tensor} shows that $R$ is Gorenstein.
By the same argument as at the end of the first paragraph, $I$ is a complete intersection.
\end{proof}

\section{Numerical approach}

In this section, we study numerical conditions under which finite injective dimension of the conormal module forces the complete intersection property. By relating the number of generators of the conormal module to the height, the type, and the first Koszul homology, we prove Theorem B by establishing Theorem \ref{main}, Corollary \ref{gGor}, and Proposition \ref{prime}. We also consider the analogous question for the first Koszul homology and obtain a converse to Kunz's result on almost complete intersections.
The first theorem of this section extends \cite[Theorem 4.8]{GT}, which gives an affirmative answer to the question for almost complete intersection ideals, one step further.

\begin{thm}\label{main}
    Let $Q$ be a regular local ring, let $I$ be a proper ideal of $Q$, and set $R=Q/I$. Assume $\mu(I)\le \height(I)+2$ and $\id_R(I/I^2)<\infty$. Then $R$ is a complete intersection.
\end{thm}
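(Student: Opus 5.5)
The plan is to prove that $R$ is Gorenstein and then finish as in the proof of Theorem A: then $\pd_R(I/I^2)<\infty$, and \cite[Theorem 3.1]{B} shows that $I$ is generated by a regular sequence. Set $g=\height(I)$, $n=\mu(I)$ and $C=I/I^2$. Since $\id_R C<\infty$ and $I\neq 0$, $R$ is Cohen--Macaulay, so $R$ has a canonical module $\omega_R$. By Foxby equivalence,
$$C\cong F\otimes_R\omega_R,\qquad F=\Hom_R(\omega_R,C),\quad \pd_R F<\infty.$$
Tensoring with $k$ gives the key numerical identity
$$n=\mu(C)=\mu(F)\cdot\type(R).$$

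The small heights go directly. If $g\le 1$, then $I$ is principal, or $n\le 3$ and the generic step below applies. If $g=2$, then $R$ is a perfect Cohen--Macaulay quotient of codimension two, so by Hilbert--Burch $\type(R)=n-1$. The identity gives $\mu(F)(n-1)=n$, which forces $n=2$; that is, $I$ is a complete intersection.

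The general strategy has two steps.

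\emph{Generic step.} I will first show that $I$ is generically a complete intersection. Localization at a minimal prime $\p$ of $I$ preserves the hypotheses: $Q_\p$ is regular of dimension $g$, $\mu(I_\p)\le n\le g+2$, and $\id(C_\p)<\infty$. So it suffices to treat the Artinian case. There $C\cong\omega_R^{\oplus a}$, so $\ell(C)=a\,\ell(R)$ and $n=a\cdot\type(R)$. Suppose I can show $a\ge g$. Then $\type(R)\le (g+2)/g<2$ once $g\ge 3$, so $R$ is Gorenstein, and Briggs' theorem gives the complete intersection property. The bound $a\ge g$ amounts to the length inequality
$$\ell(I/I^2)\ge g\,\ell(Q/I)$$
for an $\m$-primary ideal $I$ of a $g$-dimensional regular local ring, equivalently $\ell(Q/I^2)\ge (g+1)\ell(Q/I)$. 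This is the step I expect to be the main obstacle. I would try to prove it by passing to the associated graded ring, or via the exact sequence $\H_1(I)\to R^n\to C\to 0$ together with the multiplicity comparisons from \cite[Section 4.7]{BH} used in the proof of Theorem A. The idea is to deform $I$ to a generic complete intersection, where equality holds, and to use semicontinuity.

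\emph{Global step.} Once $R$ is generically a complete intersection, $C$ has constant rank $g$ on $\Ass(R)$, and $(\omega_R)_\p\cong R_\p$ there. Hence $F$ has rank $g$, so $\mu(F)\ge g$. The identity above then gives $\type(R)\le n/g\le (g+2)/g$, so $\type(R)=1$ whenever $g\ge3$. Thus $R$ is Gorenstein, and \cite[Theorem 3.1]{B} finishes the proof. The cases $g\le 2$ were handled above.
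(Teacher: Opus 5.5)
\textbf{There is a genuine gap in the generic step.}

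Most of your reduction is correct: the identity $\mu(I)=\mu(F)\type(R)$, the treatment of $g\le 1$ via unmixedness, the Hilbert--Burch divisibility argument $\mu(F)(n-1)=n$ for $g=2$, and the global step $\mu(F)\ge\rank F=g$. But the generic step carries the entire content of the theorem, and there you only \emph{assume} the bound $a\ge g$, i.e.\ $\ell(I/I^2)\ge g\,\ell(Q/I)$ in the Artinian case.

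You formulate this bound for an arbitrary $\m$-primary ideal of a $g$-dimensional regular local ring. In that generality it is not a standard fact you can invoke, and the routes you sketch do not establish it. A flat deformation of $I$ to a union of complete intersections need not exist: non-smoothable zero-dimensional schemes occur once $g\ge 3$, and $Q$ need not contain a field. ``Passing to the associated graded ring'' is not an argument. So, as written, the proposal does not prove the theorem.

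\textbf{How to close the gap.} The missing idea is to use the hypothesis $\mu(I)\le g+2$ at exactly this point; your plan uses it only in the final division. Work in the Artinian localization $R_\p$, where $\dim Q_\p=g$, and suppose $\mu=\mu(I_\p)=g+2$.
\begin{enumerate}
\item Since $R_\p$ is Cohen--Macaulay, \cite[Proposition 2.1]{Kunz} gives $\H_2(I_\p)\cong\omega_{R_\p}$ and $\H_i(I_\p)=0$ for $i>2$.
\item Since $\mu>\dim Q_\p$, Serre's theorem gives vanishing Euler characteristic, so $\ell(\H_1(I_\p))=2\ell(R_\p)$.
\item The exact sequence $\H_1(I_\p)\to R_\p^{\oplus\mu}\to C_\p\to 0$ then gives $\mu\,\ell(R_\p)\le(2+a)\,\ell(R_\p)$, i.e.\ $a\ge\mu-2=g$.
\end{enumerate}
The case $\mu=g+1$ is analogous (there $\H_1(I_\p)\cong\omega_{R_\p}$, giving $a\ge\mu-1=g$), and $\mu=g$ is trivial. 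With this inserted, your argument goes through.

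\textbf{Comparison with the paper.} The paper uses exactly this Koszul-homology input, but globally. It computes $e(\H_1(I))=2e(R)$ by the associativity formula and deduces $\mu\le a+2$ from $e(I/I^2)\le a\,e(R)$. It then eliminates the residual cases $(\type(R),a)\in\{(2,1),(2,2),(3,1)\}$ by hand. Your organization (first show $I$ is generically a complete intersection, then use $\mu(F)\ge g$) replaces that case analysis. Your divisibility argument for $g=2$ is a clean substitute for the paper's treatment of the case $(2,2)$.
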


\begin{proof}
    Suppose, for a contradiction, that $R$ is not Gorenstein. By \cite[Theorem 4.8]{GT}, we may assume that $\mu:=\mu(I)=\height(I)+2$. The condition $\id_R(I/I^2)<\infty$ shows that $R$ is Cohen--Macaulay. For a minimal system of generators $\boldsymbol{x}=x_1, \ldots,x_\mu$ of $I$, let $\H_i(I)$ be the Koszul homology over $Q$ with respect to $\boldsymbol{x}$. We prove that $e(\H_1(I))=2e(R)$, where $e(-)$ denotes the multiplicity. It follows from  \cite[Proposition 2.1]{Kunz} that $\H_0(I)=R$, $\H_2(I)=\omega_R$, and $\H_i(I)=0$ for all $i>2$, where $\omega_R$ is the canonical module of $R$. For a minimal prime ideal $\p\in\Min_Q(R)$, $\boldsymbol{x}$ is not a system of parameters of $Q_\p$ because $\height(\p)=\height(I)<\mu$. By \cite[Theorem 4.7.6]{BH}, the Euler characteristic $\chi(\boldsymbol{x}, Q_\p)$ is zero. Noting that $\H_i(I)_\p=0$ for all $i>2$, we obtain $\ell(\H_1(I)_\p)=\ell(\H_0(I)_\p)+\ell(\H_2(I)_\p)=2 \ell(R_\p)$. It follows from \cite[Corollary 4.7.8]{BH} that $e(\H_1(I))=\sum_{\p\in\Min_Q(R)} \ell(\H_1(I)_{\p}) e(R/\p)=2\sum_{\p\in\Min_Q(R)} \ell(R_{\p}) e(R/\p)=2e(R)$.

    There exists an exact sequence $\H_1(I)\to R^{\oplus \mu} \to I/I^2\to 0$ and a minimal covering $\omega_R^{\oplus a} \twoheadrightarrow I/I^2$ for some $a$. One has the inequality 
$$
\mu e(R)=e(R^{\oplus \mu})\le e(\H_1(I))+e(I/I^2)\le e(\H_1(I))+e(\omega_R^{\oplus a})=2e(R)+ae(R)=(a+2)e(R), 
$$
    and hence $\mu\le a+2$. On the other hand, an analogous argument to the proof of \cite[Theorem 4.8]{GT} shows that $\mu=a\type(R)$. Then we get $a\type(R)\le a+2$. Since $R$ is not Gorenstein, $\type(R)\ge 2$ holds. The only possible cases in which the inequality $a\type(R)\le a+2$ can hold are $(\type(R), a)= (2, 1), (2, 2), (3, 1)$. In the first case, we have $\height(I)=\mu-2=a\type(R)-2=0$ and hence $I=0$, which is a contradiction. In the second case, $\mu=4$ and $\height(I)=2$ hold. By the Hilbert--Burch theorem, $R$ has a minimal free resolution as follows: $$0\to Q^{\oplus 3}\to Q^{\oplus 4}\to Q\to R \to 0.$$
    This implies that $\type(R)=3$, which is a contradiction. In the last case, we have $\mu=3$ and $\height(I)=1$. Since $I$ is a perfect ideal, such a situation cannot occur.
    Therefore $R$ is Gorenstein. We see that $\pd_R(I/I^2)<\infty$, and $R$ is a complete intersection by \cite[Theorem 3.1]{B}.
\end{proof}

\begin{rem}\label{R1}
    Let $Q$ be a regular local ring, let $I$ be a proper ideal of $Q$, and set $R=Q/I$. Suppose $\id_R(I/I^2)<\infty$. Then there is a surjection $\omega_R^{\oplus a} \twoheadrightarrow I/I^2$. Assume further that $R$ is generically Gorenstein. If $a=1$, then $I$ is generated by a regular element. Indeed, $I_\p$ is a principal ideal for all minimal primes $\p$ of $I$. Therefore $\height(I)=1$ and since $R$ is Cohen--Macaulay, $I$ itself is a principal ideal. 
\end{rem}

By this remark, we obtain the following corollary and proposition.

\begin{cor}\label{gGor}
    Let $Q$ be a regular local ring, let $I$ be a proper ideal of $Q$, and set $R=Q/I$. Assume $R$ is generically Gorenstein, and $\mu(I)\le \height(I) + \type(R)$. If $\id_R(I/I^2)< \infty$, then $R$ is a complete intersection. 
\end{cor}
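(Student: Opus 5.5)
We argue by combining the equality $\mu(I)=a\type(R)$ from the proof of Theorem \ref{main} with Remark \ref{R1}. Since $\id_R(I/I^2)<\infty$ and $I\neq 0$, the ring $R$ is Cohen--Macaulay and admits a canonical module $\omega_R$, because $Q$ is regular. By Foxby equivalence, $I/I^2\cong \Hom_R(\omega_R,I/I^2)\otimes_R\omega_R$, so there is a minimal surjection $\omega_R^{\oplus a}\twoheadrightarrow I/I^2$ with $a=\mu_R(\Hom_R(\omega_R,I/I^2))$. The argument adapted from \cite[Theorem 4.8]{GT}, already used in the proof of Theorem \ref{main}, gives $\mu(I)=\mu_R(I/I^2)=a\type(R)$. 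This holds because minimal generators of the tensor product multiply: $\mu(M\otimes N)=\mu(M)\mu(N)$ over a local ring.

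Next we bound $\type(R)$ and $a$. Set $g=\height(I)$ and $t=\type(R)$. The hypothesis gives $at=\mu(I)\le g+t$, that is, $(a-1)t\le g$. We also need a lower bound on $a$. Localize at a minimal prime $\p$ of $I$. Since $R$ is generically Gorenstein, $(\omega_R)_\p\cong R_\p$, so $(I/I^2)_\p$ is generated by $a$ elements. But $\mu(I_\p/I_\p^2)=\mu(I_\p)\ge \height(I_\p)=g$, since $I_\p$ is primary to the maximal ideal of the regular ring $Q_\p$, which has dimension $g$. Hence $a\ge g$.

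Combining the two bounds, $(g-1)t\le (a-1)t\le g$. Suppose $R$ is not Gorenstein, so $t\ge 2$. Then $2(g-1)\le g$, which forces $g\le 2$.
\begin{enumerate}
\item If $g=1$, then $(a-1)t\le 1$ with $t\ge2$ forces $a=1$. Remark \ref{R1} then makes $I$ principal, so $R$ is Gorenstein, a contradiction.
\item If $g=2$, then $(a-1)t\le 2$ forces $a=2$ and $t=2$, so $\mu(I)=4$. By Hilbert--Burch, a perfect height two ideal with $4$ generators has $\type(R)=3$, a contradiction.
\item The case $g=0$ gives $I=0$ (since $Q$ is a domain), which is excluded.
\end{enumerate}
Hence $R$ is Gorenstein. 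Then $I/I^2$ has finite injective dimension over a Gorenstein ring, so it has finite projective dimension, and \cite[Theorem 3.1]{B} shows that $R$ is a complete intersection.

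The main obstacle is the identity $\mu(I)=a\type(R)$. I will rely on the same argument as in the proof of Theorem \ref{main}, namely the tensor decomposition of $I/I^2$ from Foxby equivalence, and check that the minimal number of generators multiplies under tensor product over the local ring $R$.
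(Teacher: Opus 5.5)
Your proof is correct and follows the same route as the paper: $\mu(I)=a\type(R)$ from the Foxby decomposition, the bound $a\ge\height(I)$ obtained at a minimal prime via generic Gorensteinness, then Remark \ref{R1} and \cite[Theorem 3.1]{B}. The only difference is that you keep $(a-1)\type(R)\le g$ and $g\le a$ separate instead of merging them into $\mu(I)\le a+\type(R)$; this isolates the single residual case $g=a=\type(R)=2$, which you dispose of directly by Hilbert--Burch instead of invoking Theorem \ref{main}.
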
   
\begin{proof} 
   Note that by the assumptions, $(I/I^2)_\p$ is a free $R_\p$-module. Therefore, $I$ is generically a complete intersection.
   By localizing $\H_1(I)\to R^{\oplus \mu} \to I/I^2\to 0$ in a minimal prime ideal of $I$ and counting ranks, \cite[2.5]{H2} yields the inequality $\mu\le a+\type(R)$, where $a$ is the same as in the proof of Theorem \ref{main}. Since $\mu=a \type(R)$, the only possibilities  for the inequality are $a=1$, $a=2=\type(R)$, or $\type(R)=1$. For $\type(R)\le 2$, the assertion follows from Theorem \ref{main}, and for $a=1$, it  follows from Remark \ref{R1}.
\end{proof}

\begin{prop}\label{prime}
   Let $Q$ be a regular local ring, let $I$ be a proper ideal of $Q$, and set $R=Q/I$. Suppose that $\mu(I)$ is a prime number. Assume either $Q$ is unramified or $R$ is generically Gorenstein. If $\id_R(I/I^2)<\infty$, then $I$ is a complete intersection.
\end{prop}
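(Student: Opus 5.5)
Set $\mu=\mu(I)$. As in the proofs of Theorem \ref{main} and Corollary \ref{gGor}, $\id_R(I/I^2)<\infty$ makes $R$ Cohen--Macaulay. It also gives a minimal surjection $\omega_R^{\oplus a}\twoheadrightarrow I/I^2$. The argument of \cite[Theorem 4.8]{GT} then yields the factorization $\mu=a\type(R)$. Since $\mu$ is prime, only two cases occur: $\type(R)=1$, or $a=1$ with $\type(R)=\mu$.

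In the first case $R$ is Gorenstein. Then $I/I^2$ has finite projective dimension, and \cite[Theorem 3.1]{B} shows $I$ is a complete intersection. So everything reduces to excluding $a=1$, $\type(R)=\mu\ge 2$, under either hypothesis.

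\textbf{Generically Gorenstein case.} If $R$ is generically Gorenstein and $a=1$, then Remark \ref{R1} shows directly that $I$ is principal, generated by a regular element. Hence $\mu=1$. This contradicts the fact that $\mu$ is prime, so $a=1$ is impossible here.

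\textbf{Unramified case.} This is the main obstacle. Here $a=1$ gives a surjection $\omega_R\twoheadrightarrow I/I^2$, so $I/I^2$ is a homomorphic image of $\omega_R$ with $\mu(I/I^2)=\mu(\omega_R)=\type(R)$. Since $I/I^2$ has finite injective dimension and is maximal Cohen--Macaulay, it is a direct sum of copies of $\omega_R$ locally in the appropriate sense. I would localize at a minimal prime $\p$ of $I$, where $R_\p$ is Artinian. There $(I/I^2)_\p\cong\omega_{R_\p}^{\oplus b}$, and the minimal surjection forces $b\le 1$, so $(I/I^2)_\p\cong\omega_{R_\p}$.

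I would then compare lengths using the exact sequence $\H_1(I)_\p\to R_\p^{\oplus\mu}\to (I/I^2)_\p\to 0$. Over an unramified regular local ring $Q_\p$, the ideal $I_\p$ is nonzero, and the expected input is a lower bound on $\mu((I/I^2)_\p)$. Concretely, I would aim to show $\mu(I_\p)\ge\height(I_\p)+1$ unless $I_\p$ is a complete intersection. Alternatively, I would use a result of Herzog type \cite{H2}, valid in the unramified case, that $(I/I^2)_\p\cong\omega_{R_\p}$ forces $I_\p$ to be a complete intersection of height one.

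Granting this, $R_\p$ is Gorenstein for every minimal prime $\p$, so $R$ is generically Gorenstein. Then Remark \ref{R1} applies and again gives $\mu=1$, a contradiction. The delicate point is justifying the Artinian statement "$I/I^2\cong\omega$ implies $I$ principal". My first attempt would be the length equality $\ell(\H_1)=\ell(R^{\mu})-\ell(I/I^2)$ combined with rigidity of the Koszul complex over unramified regular rings (the Auslander--Lichtenbaum rigidity of Tor). With that, $R_\p$ Gorenstein follows, and the argument closes via Remark \ref{R1} and \cite[Theorem 3.1]{B}.
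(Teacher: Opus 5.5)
Your proofs of the case $\type(R)=1$ and of the generically Gorenstein case via Remark \ref{R1} agree with the paper, as does your reduction to $a=1$, $\type(R)=\mu\ge 2$. Your localization at a minimal prime $\p$ of $I$ to get $(I/I^2)_\p\cong\omega_{R_\p}$ is also correct. The paper instead obtains $I/I^2\cong\omega_R$ globally: it applies $\Hom_R(\omega_R,-)$ to get a cyclic module $R/J$, and shows $J=0$ at associated primes.

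The gap is the step you yourself flag: that $(I/I^2)_\p\cong\omega_{R_\p}$ over an unramified regular local ring forces $\height(I_\p)\le 1$. None of your proposed tools gives this.
\begin{itemize}
\item The bound ``$\mu(I_\p)\ge\height(I_\p)+1$ unless $I_\p$ is a complete intersection'' holds for every ideal of a regular local ring, so it yields nothing.
\item The length identity $\ell(\H_1(I)_\p)=\mu\,\ell(R_\p)-\ell((I/I^2)_\p)$ is false in general. The map $\H_1(I)\to R^{\oplus\mu}$ need not be injective, so you only get an inequality.
\item Neither that inequality nor rigidity of Koszul homology or Tor produces a contradiction. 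Since $\mu>\height(I)$ and $R$ is Cohen--Macaulay, the Koszul homology $\H_i(I)$ is nonzero for all $0\le i\le \mu-\height(I)$ in any case.
\end{itemize}

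The missing idea is to pass from the module isomorphism to a statement about the ring $Q/I^2$. The natural sequence $0\to I/I^2\to Q/I^2\to R\to 0$ is a commutative extension of $R$ by $I/I^2\cong\omega_R$, so $Q/I^2$ is Gorenstein by \cite[Theorem]{Fo}. Then \cite[Theorem 2]{H3} shows that, for $Q$ unramified regular, Gorensteinness of $Q/I^2$ forces $\height(I)\le 1$. Since $I$ is unmixed in the factorial ring $Q$, this gives $\mu(I)\le 1$, contradicting that $\mu(I)$ is prime.

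This argument also works verbatim for $Q_\p$ and $I_\p$ in your localized version, since localizations of unramified regular local rings are again unramified. You would then get $\height(I)=\height(I_\p)\le 1$ and could finish as above. Without this input, however, your unramified case is not proved.
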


\begin{proof}
There is a surjection $\omega_R^{\oplus a} \twoheadrightarrow I/I^2$ with $\mu(I)=a \type(R)$. Since $\mu(I)$ is prime, either $\type(R)=1$ or $a=1$. If $\type(R)=1$, then $R$ is Gorenstein, and hence $R$ is a complete intersection. Suppose that $a=1$. If $R$ is generically Gorenstein, the assertion follows from Remark \ref{R1}. Suppose $Q$ is unramified. Applying $\Hom_R(\omega_R,-)$ to $\omega_R \twoheadrightarrow I/I^2$ gives a surjection $R\twoheadrightarrow \Hom_R(\omega_R, I/I^2)$. Hence $R/J\cong\Hom_R(\omega_R, I/I^2)$ for some ideal $J$, and 
$$
\omega_R/J \omega_R\cong \omega_R \otimes_R (R/J)\cong \omega_R \otimes_R\Hom_R(\omega_R, I/I^2)\cong I/I^2
$$
as $\id_R(I/I^2)<\infty$.
If $J\ne 0$, then we can choose $\p\in\Ass(J)\subseteq\Ass(R)$. Then $(R/J)_\p$ is a free $R_\p$-module as $\pd_{R_\p} (R/J)_\p=\pd_{R_\p} \Hom_R(\omega_R, I/I^2)_\p\le \depth R_\p=0$, which contradicts $J_\p\ne 0$. We obtain $I/I^2\cong \omega_R$.

The natural exact sequence $0\to I/I^2\to Q/I^2\to R\to 0$ is a commutative extension in the sense of \cite{Fo}, and \cite[Theorem]{Fo} shows that $Q/I^2$ is Gorenstein. It follows from \cite[Theorem 2]{H3} that $\height(I)\le 1$ since $Q$ is unramified. As $I$ is unmixed, $\height(I)\le 1$ implies that $\mu(I)\le 1$, a contradiction.
\end{proof}

\begin{rem}
     Let $(Q,\n,k)$ be a regular local ring. Let $A=(f_{ij})$ be an $m\times n$ matrix with $f_{ij}\in \n$, $1\le i \le m$, $1 \le j \le n$, and $ m\le n$. Let $I$ be an ideal generated by $m$-minors of $A$. Assume $R=Q/I$ is generically Gorenstein. If $\id_R(I/I^2)<\infty$ and $\height(I)=n-m+1$, then $I$ is a complete intersection. Indeed, by Eagon--Northcott, one has $\mu(I)=\binom{n}{m}$ and  $\type(R)=\binom{n-1}{m-1}$; see \cite[Appendix 2]{E}. Then the equality $\mu(I)=\mu(\Hom_R(\omega_R,I/I^2)) \type(R)$ shows that $\mu(\Hom_R(\omega_R,I/I^2))=\frac{n}{m}$. Since $I$ is generically a complete intersection,  $\rank(I/I^2) = \height(I)=n-m+1$. It follows that $\frac{n}{m} \ge n-m+1$. This inequality holds only for $m=1$ or $m=n$. In both cases, $I$ is a complete intersection.
\end{rem}

Gulliksen proved that if $\pd_Q(I)<\infty$ and $\H_1(I)$ is free over $R$, then $I$ is generated by a regular sequence; see \cite[Proposition 1.4.9]{GL}. Briggs subsequently strengthened this result by showing that the same conclusion follows from the weaker assumption $\pd_R \H_1(I)<\infty$ \cite[Theorem B]{B}. Thus, under the finite projective dimension hypothesis on $I$, both the conormal module and the first Koszul homology detect the complete intersection property through finiteness of their projective dimensions over $R$. By \cite[Proposition 2.1]{Kunz}, $\H_1(I)\cong \omega_R$ if $R$ is Cohen--Macaulay and $\mu(I)=\height(I) +1$. Therefore, finiteness of injective dimension of $\H_1(I)$ does not necessarily force the complete intersection property. Recall that an ideal $I$ is called an almost complete intersection if $\mu(I)\le \height(I)+1$. 

\begin{ques}\label{Q2}
    Let $Q$ be a Gorenstein local ring, $I$ a proper ideal of $Q$ of finite projective dimension, and set $R=Q/I$. If $\H_1(I)$ is nonzero and $\id_R\H_1(I)<\infty$, is $I$ an almost complete intersection?
\end{ques}

We answer Question \ref{Q2} in the following cases.

\begin{thm}\label{kunz}
    Let $Q$ be a Gorenstein local ring and $I$ a proper ideal of $Q$. Let $R=Q/I$ be Cohen--Macaulay, and let $\omega_R$ be a canonical module of $R$. Assume one of the following conditions holds.
\begin{enumerate}
    \item There is an isomorphism $\H_1(I)\cong \omega_R$.
    \item One has $\height(I)=2$, $\pd_QI<\infty$, and $\id_R\H_1(I)<\infty$.
    \item One has $\mu(I) \le \height(I) + 2$, $\pd_QI<\infty$, and $\id_R(\H_1(I))<\infty$.
\end{enumerate} Then $I$ is an almost complete intersection. 
\end{thm}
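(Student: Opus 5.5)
The plan is to prove (1) directly and then reduce (2) and (3) to (1). Throughout, set $g=\height(I)$ and $\mu=\mu(I)$, and let $\boldsymbol{x}=x_1,\ldots,x_\mu$ be a minimal generating set of $I$. Since $R$ is Cohen--Macaulay, $\grade I=g$. By depth sensitivity of the Koszul complex, $\H_i(I)=0$ for $i>\mu-g$. Over the Gorenstein ring $Q$ one has $\H_{\mu-g}(I)\cong\Ext_Q^g(R,Q)\cong\omega_R$.

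\emph{Case (1).} Suppose, to the contrary, that $s:=\mu-g\ge 2$. Then $\H_1(I)$ and $\H_s(I)$ are both isomorphic to $\omega_R$. Consider the multiplicative structure of the Koszul homology algebra: elements of $\H_1(I)$ have square zero, so the product gives an alternating pairing $\H_1(I)\otimes_R\H_1(I)\to\H_2(I)$. I would argue generically, at a minimal prime $\p$ of $I$. There $R_\p$ is Artinian, $Q_\p$ is Gorenstein of dimension $g$, and $\boldsymbol{x}$ is not a system of parameters of $Q_\p$, so $\chi(\boldsymbol{x},Q_\p)=0$ by \cite[Theorem 4.7.6]{BH}. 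This gives
\[\textstyle\sum_i(-1)^i\ell(\H_i(I)_\p)=0.\]
I would then combine this with the exact sequence $\H_1(I)\to R^{\oplus\mu}\to I/I^2\to0$ and with the equality $\ell(\H_1(I)_\p)=\ell((\omega_R)_\p)=\ell(R_\p)$. The aim is a length inequality of the form $\mu\,\ell(R_\p)\le\ell(R_\p)+\ell((I/I^2)_\p)$, paired with an upper estimate on $\ell((I/I^2)_\p)$ in the spirit of \cite[Proposition 5.4]{HIW} and \cite[2.5]{H2}. Together these should give $\mu\le g+1$.

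If that count turns out to be too weak, the fallback is an argument modelled on Lemma \ref{Ass tensor}. The self-duality $\H_i(I)\cong\Hom_R(\H_{s-i}(I),\omega_R)$, valid in the perfect range, combined with $\H_1\cong\omega_R\cong\H_s$, would force $\H_{s-1}(I)\cong R$. Comparing minimal numbers of generators ($\mu(\H_1(I))=\type R$) and using that $\H_{s-1}(I)$ is annihilated by $I$, I would then try to derive that $R$ is Gorenstein. In that case $I$ is a complete intersection by \cite[Theorem B]{B}, which contradicts $\H_1(I)\neq\H_0(I)$ being forced when $s\ge 2$.

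\emph{Cases (2) and (3).} Here the first step is to show that $\H_1(I)$ is maximal Cohen--Macaulay.
\begin{itemize}
\item In case (2), $I$ is perfect of grade $2$, so the Koszul homology is Cohen--Macaulay (strongly Cohen--Macaulay codimension two ideals, Avramov--Herzog).
\item In case (3), if $\mu\le g+1$ there is nothing to prove. If $\mu=g+2$, then by \cite[Proposition 2.1]{Kunz} the only nonzero homologies are $\H_0=R$, $\H_1$ and $\H_2=\omega_R$, and the depth lemma applied to the short Koszul complex gives that $\H_1(I)$ is maximal Cohen--Macaulay.
\end{itemize}
A maximal Cohen--Macaulay module of finite injective dimension is a direct sum of copies of $\omega_R$, so $\H_1(I)\cong\omega_R^{\oplus b}$ for some $b$.

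It then remains to show $b=1$ and to invoke (1). For (3) with $\mu=g+2$, I would use the multiplicity computation from the proof of Theorem \ref{main}, namely $e(\H_1(I))=2e(R)$, which gives $b=2$. I would then show that $b=2$ is impossible by comparing $\type R$ with $\mu(\H_1(I))$ and using the Hilbert--Burch constraints, exactly as in the case analysis of Theorem \ref{main}. For (2), the same Euler characteristic localization at minimal primes expresses $b$ through $\mu-2$, and self-duality $\H_1\cong\Hom(\H_{\mu-3},\omega_R)$ should pin it down.

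The main obstacle I expect is case (1), specifically turning the isomorphism $\H_1(I)\cong\omega_R$ into the numerical bound $\mu\le g+1$ without assuming that $I$ is generically a complete intersection.
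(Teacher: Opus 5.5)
Your proposal has a genuine gap in part (1), and the reductions in (2) and (3) rest on steps that fail as stated.

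\textbf{Part (1).} The primary route cannot work. The sequence $\H_1(I)\to R^{\oplus\mu}\to I/I^2\to0$ only gives the lower bound $\ell((I/I^2)_\p)\ge(\mu-1)\ell(R_\p)$. The upper bound $\ell((I/I^2)_\p)\le g\,\ell(R_\p)$ you would need is false even under the hypothesis of (1). Take $I=\n^2\subset k[[x,y]]$ with $\n=(x,y)$. This is a Cohen--Macaulay almost complete intersection, so $\H_1(I)\cong\omega_R$ by \cite[Proposition 2.1]{Kunz}, yet $\ell(I/I^2)=7>6=2\ell(R)$.

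The fallback is not an argument either. The claim $\H_{s-1}(I)\cong R$ would need $\H_{s-1}(I)$ to be maximal Cohen--Macaulay, the Gorenstein step is left open, and \cite[Theorem B]{B} needs $\pd_QI<\infty$, which (1) does not assume.

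The missing idea is Serre's positivity of higher Euler characteristics \cite[Chapter IV, Appendix II]{S}. For a Koszul complex with finite-length homology, $\chi_j=\sum_{i\ge j}(-1)^{i-j}\ell(\H_i)\ge0$, with equality only if $\H_i=0$ for all $i\ge j$. With this, your own setup finishes at once. At a minimal prime $\p$ of $I$, the facts $\chi(\boldsymbol{x},Q_\p)=0$ and $\ell(\H_0(I)_\p)=\ell(R_\p)=\ell(\H_1(I)_\p)$ give $\chi_2=0$. Hence $\H_i(I)_\p=0$ for $i\ge2$, contradicting $\H_s(I)_\p\cong(\omega_R)_\p\neq0$ when $s\ge2$. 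This is the paper's argument, except that the paper reaches finite length by factoring out a regular sequence on $Q$ and $R$ rather than by localizing.

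\textbf{Part (2).} The Euler characteristic at $\p$ involves every $\H_i(I)_\p$ with $i\le\mu-2$, and $I$ need not be generically a complete intersection, so it does not determine $b$. The step that works is a generator count. Since $\H_1(I)$ is a quotient of the module $Z_1$ of $1$-cycles of $K(I)$, Hilbert--Burch gives $b\type(R)=\mu(\H_1(I))\le\mu(Z_1)=\beta_2^Q(R)=\mu-1=\type(R)$. So $b\le1$, and (1) applies.

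\textbf{Part (3).} A depth-lemma chase along the Koszul complex with $\H_0=R$ and $\H_2=\omega_R$ only yields $\depth\H_1(I)\ge\dim R-2$. So Cohen--Macaulayness of $\H_1(I)$ is a genuine input; the paper cites \cite[Remark 1.3.2]{V3}.

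Your computation $b=2$ from $e(\H_1(I))=2e(R)$ is fine. However, excluding $b=2$ via Hilbert--Burch works only for $g=2$. The case analysis in Theorem \ref{main} relied on $\mu=a\type(R)$, which came from $\id_R(I/I^2)<\infty$ and is unavailable here.

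What is needed is the duality $\H_1(I)\cong\Hom_R(\H_1(I),\omega_R)$ in deviation $2$, obtained from \cite[Theorem A]{MRS} together with Cohen--Macaulayness. Combined with $\H_1(I)\cong\omega_R^{\oplus b}$, it makes $\H_1(I)\cong R^{\oplus b}$ free. Then \cite[Theorem B]{B}, legitimate here since $\pd_QI<\infty$, forces $I$ to be a complete intersection, contradicting $\mu=g+2$.
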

\begin{proof}
    (1) Let $x\in Q$ be regular on both $Q$ and $R$. The exact sequence of Koszul complexes $0\to K(I) \overset{x}\to K(I) \to K(I)/xK(I) \to 0$ induces a long exact sequence $$\dots \to \H_i(I) \overset{x}\to \H_i(I) \to \H_i(K(I)/xK(I))\to \H_{i-1}(I)\overset{x}\to \H_{i-1}(I) \to \dots.$$
    Since $\H_1(I)\cong \omega_R$ and $x$ is regular on both $R$ and $\omega_R$, the zeroth terms and the first terms of the long exact sequence above break into short exact sequences $0\to \H_1(I) \overset{x}\to \H_1(I) \to \H_1(K(I)/xK(I))\to 0$ and $0\to R \overset{x}\to R \to Q/(x,I) \to 0$. Thus, we get $\H_1(K(I,x))\cong \H_1(K(I)/xK(I)) \cong \omega_R/x\omega_R$. This shows that by modding out by a regular sequence, we may assume that $R$ is Artinian. Since $\H_1(I)\neq 0$, $R$ is not a complete intersection. Therefore, we may assume $\mu(I)=d+r$ where $d=\dim Q$ and $r\ge 1$. We show that $r=1$. It is enough to prove $\H_i(I)=0$ for $i\ge 2$. Let $\chi(I,R)=\sum_{i=0}(-1)^i\ell(\H_i(I))$ be the Euler characteristic of $R$. By Serre's theorem \cite[Theorem 4.7.6]{BH}, $\chi(I,R)=0$. Since by assumption $\ell(\H_0(I))= \ell(\H_1(I))$, we have $\chi_2(I,R)=\sum_{i=2}(-1)^i\ell(\H_i(I))=0$. It follows from \cite[Chapter IV, Appendix II]{S} that $\H_i(I)=0$ for all $i\ge 2$. This finishes the proof.

    (2)  If $\H_1(I)=0$, there is nothing to prove, so we assume $\H_1(I)\neq 0$. By \cite[Corollary 2.1]{Hu} and the assumptions, we have $\H_1(I)\cong \omega_R^{\oplus a}$. One observes that $a\type(R)=\mu(\H_1(I))\le \beta_2^Q(R)=\type(R)$. Therefore, we get $a\le 1$. Now, the result follows from (1).

    (3) Let $c=\mu(I)-\height(I)$. By \cite[Proposition 2.1]{Kunz} one has $\H_c(I)\cong \omega_R$. By \cite[Theorem A]{MRS}, there is a duality $$\Hom_R(\Hom_R(\H_i(I),\omega_R),\omega_R)\cong \Hom_R(\H_{c-i}(I),\omega_R),$$ for all $i\ge 0$. Since $c\le 2$ by assumption, it is enough to show that the case $c=2$ is not an option. If $c=2$, by \cite[Remark 1.3.2]{V3}, $\H_1(I)$ is Cohen--Macaulay. Therefore, by using the duality we have $\H_1(I)\cong \Hom_R(\H_1(I),\omega_R)$. This shows that $\H_1(I)$ is a free module and so that $R$ is a complete intersection by \cite[Theorem B]{B}, a contradiction. 
\end{proof}

\begin{rem}
    Keeping assumptions of Theorem \ref{kunz}, if $\mu(I)=\height(I)+2$ and $\id_R\H_1(I)<\infty$, then $Q \twoheadrightarrow R$ is a quasi-complete intersection homomorphism in the sense of \cite{AHS}. Indeed, as in the proof of Theorem \ref{kunz}(3), $\H_i(I)$ is free for all $i$. Since $\H_2(I)\cong \omega_R$, we see that $R$ is Gorenstein and $\H_2(I)\cong R$. Let $\H_1(I)\cong R^{\oplus r}$. Let $\boldsymbol{x}$ be a minimal system of generators of $I$. Localizing at a minimal prime $\p$ of $I$, the vanishing of $\chi(\boldsymbol{x},Q_\p)$ gives $0=\ell(\H_0(I)_\p)-\ell(\H_1(I)_\p)+\ell(\H_2(I)_\p)=(2-r)\ell(R_\p)$, and hence $r=2$. By the Koszul duality for Gorenstein rings, one has an isomorphism $\H_1(I) \overset{\cong}\longrightarrow \Hom_R(\H_1(I),\H_2(I))$ given by $[z] \longmapsto ([w]\mapsto [z\wedge w])$. This shows that $\bigwedge^2 \H_1(I) \cong \H_2(I)$ and therefore, $\bigwedge^* \H_1(I) \cong \H_*(I)$ as desired.
\end{rem}

\section{Normal and graded cases}

In this section, we consider the question in two further settings: normal domain quotients and monomial ideals in polynomial rings. For normal domains, the divisor class group provides a natural way to measure the obstruction coming from the canonical module, and this leads to Theorem C. We then turn to the graded case and prove Theorem D for monomial ideals. The key ingredient for the normal case is the following lemma, which allows us to control bases of modules of finite projective dimension in codimension one.

\begin{lem}\label{basislemma}
Let $R$ be a local domain, let $K$ be the fraction field of $R$, and let $M$ be a finitely generated $R$-module with finite projective dimension.
There exists a $K$-basis $e_1,\ldots,e_r$ of $M\otimes_R K$ satisfying the following condition: for every prime ideal $\p$ of $R$ such that $M_\p$ is free, and for every $R_\p$-basis $f_1,\ldots,f_r$ of $M_\p$, the determinant of the change of basis matrix from $e_1,\ldots,e_r$ to $f_1,\ldots,f_r$, viewed over $K$, is a unit in $R_\p$.
\end{lem}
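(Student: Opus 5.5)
Take a finite free resolution
\[
0\to F_n\xrightarrow{\partial_n}\cdots\to F_1\xrightarrow{\partial_1} F_0\to M\to 0 .
\]
The idea is to build a canonical ``determinant line'' for $M$ from this resolution, in the spirit of Knudsen--Mumford determinants of perfect complexes. Tensoring with $K$ gives a split exact sequence of $K$-vector spaces, so
\[
\det_K(M\otimes_R K)\cong\bigotimes_i \det_K(F_i\otimes_R K)^{(-1)^i}.
\]
Each $\det_R F_i$ is a free $R$-module of rank one. Fix generators of these; this gives a lattice $L=R\cdot\lambda\subseteq \det_K(M\otimes_R K)$, generated by a single element $\lambda$. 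Then choose any $K$-basis $e_1,\ldots,e_r$ of $M\otimes_R K$ with $e_1\wedge\cdots\wedge e_r=\lambda$; for instance, take an arbitrary basis and rescale $e_1$. The whole claim then reduces to the following statement: whenever $M_\p$ is free with basis $f_1,\ldots,f_r$, the element $f_1\wedge\cdots\wedge f_r$ generates $L_\p=R_\p\lambda$. Granting this, the change-of-basis determinant is the scalar $u$ with $f_1\wedge\cdots\wedge f_r=u\lambda$, and it is a unit in $R_\p$.

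To prove the reduction, localize at $\p$. Then $(F_\bullet)_\p$ is a finite free resolution of the free module $M_\p$, so it is split exact over $R_\p$. Concretely, the module of cycles $Z_1$ localizes to a projective module, hence a free one, and inductively every $(Z_i)_\p$ is free. So each
\[
0\to (Z_{i+1})_\p\to (F_{i})_\p\to (Z_{i})_\p\to 0
\]
is a split exact sequence of free $R_\p$-modules, with $Z_0=M$. For a split short exact sequence of free modules $0\to A\to B\to C\to 0$ over $R_\p$, the canonical isomorphism $\det B\cong \det A\otimes\det C$ is an isomorphism of free rank-one $R_\p$-modules, not merely of $K$-lines. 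Chaining these isomorphisms shows that the integral lattice
\[
\bigotimes_i \det_{R_\p}((F_i)_\p)^{(-1)^i}
\]
corresponds exactly to $\det_{R_\p}(M_\p)$ inside $\det_K(M\otimes K)$. The lattice on the left is $L_\p$. The lattice on the right is generated by $f_1\wedge\cdots\wedge f_r$ for any $R_\p$-basis of $M_\p$. This gives the reduction.

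The main obstacle is ensuring that the identification over $K$ is canonical: the isomorphism obtained by localizing at $\p$ and then tensoring with $K$ must agree with the one fixed at the generic point. If it did not, $L_\p$ might not be the same sublattice. I would handle this with the standard fact that the determinant of an exact complex of free modules has a canonical trivialization, compatible with base change. Concretely, splittings of $F_\bullet\otimes K$ change the isomorphism only by determinants of unipotent maps, which equal $1$; up to fixed signs, the resulting isomorphism does not depend on the splitting. If I wanted to avoid this formalism, I would use a direct alternative: choose bases of the $F_i$, so that $\lambda$ is given explicitly by Cayley's formula, as a product of minors of the matrices $\partial_i$ over $K$ (the Buchsbaum--Eisenbud multiplicative structure). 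Then check by elementary row and column operations over $R_\p$ that these minors change by units, while $f_1\wedge\cdots\wedge f_r$ pairs with them to give a unit.
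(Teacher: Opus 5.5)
Your proposal is correct, and it is essentially the paper's argument recast in the language of determinant lines. The paper inducts on $\pd_R M$ via one sequence $0\to N\to R^{\oplus s}\to M\to 0$ and normalizes the lifts so that $\widetilde{e}_1,\ldots,\widetilde{e}_r,d_1,\ldots,d_{s-r}$ has determinant $1$ against the standard basis of $R^{\oplus s}$, which is the counterpart of your $e_1\wedge\cdots\wedge e_r=\lambda$; at $\p$ it reads off $\det B$ from a block-triangular matrix, which is exactly the multiplicativity of determinants along split short exact sequences of free $R_\p$-modules that you chain through the syzygies. Because the paper computes with explicit matrices inside the single space $K^{\oplus s}$, the compatibility issue you flag never arises there; in your version it is resolved, as you indicate, because the map $(a_1\wedge\cdots\wedge a_k)\otimes(c_1\wedge\cdots\wedge c_l)\mapsto a_1\wedge\cdots\wedge a_k\wedge\widetilde{c}_1\wedge\cdots\wedge\widetilde{c}_l$ does not depend on the lifts $\widetilde{c}_j$ and hence commutes with base change from $R_\p$ to $K$.
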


\begin{proof}
We may assume $r>0$.
We prove this by induction on the projective dimension of $M$. If $M$ is projective, then, since $R$ is local, $M$ is free, so we may simply choose a basis $e_1,\ldots,e_r$ of $M$ over $R$.
Suppose $\pd_R M>0$ and take an exact sequence $0\to N\to R^{\oplus s} \to M\to 0$.
Put $(-)_K:=(-)\otimes_R K$.
Let $c_1,\ldots,c_s$ be the standard basis of $R^{\oplus s}$.
By the induction hypothesis, we may choose a $K$-basis $d_1,\ldots,d_{s-r}$ of $N_K$ satisfying the condition in the statement.
We choose a $K$-basis $e_1,\ldots,e_r$ of $M_K$ and lifts $\widetilde{e}_1,\ldots,\widetilde{e}_r\in R_K^{\oplus s}$ of $e_1,\ldots,e_r$ such that the determinant of the change of basis matrix from $\widetilde{e}_1,\ldots,\widetilde{e}_r, d_1,\ldots,d_{s-r}$ to $c_1,\ldots,c_s$ is $1$; that is, $\det A=1$ if $(\widetilde{e}_1,\ldots,\widetilde{e}_r \ d_1 \cdots  d_{s-r})^t=A(c_1 \cdots c_s)^t$.

Let $\p\in\spec R$ be such that $M_\p$ is free. Then so is $N_\p$.
Let $f_1,\ldots,f_r$ be an $R_\p$-basis of $M_\p$ and $g_1,\ldots,g_{s-r}$ be an $R_\p$-basis of $N_\p$.
Choose lifts $\widetilde{f}_1,\ldots,\widetilde{f}_r\in R_\p^{\oplus s}$ of $f_1,\ldots,f_r$. Then $\widetilde{f}_1,\ldots,\widetilde{f}_r,g_1,\ldots,g_{s-r}$ is an $R_\p$-basis of $R_\p^{\oplus s}$.
When we write $(f_1 \cdots f_r)^t=B(e_1 \cdots e_r)^t$ and $(g_1 \cdots g_{s-r})^t=C(d_1 \cdots d_{s-r})^t$ in $K$, the differences $\widetilde{f}_i-\sum_jB_{ij}\widetilde{e}_j$ belong to $N_K$. Hence, for some matrix $D$ over $K$,
$$
\begin{pmatrix} \widetilde{f}_1\\ \vdots \\ \widetilde{f}_r \\ g_1 \\ \vdots \\ g_{s-r}\end{pmatrix}
=\begin{pmatrix} B & D \\ 0 & C\end{pmatrix}
\begin{pmatrix} \widetilde{e}_1\\ \vdots \\ \widetilde{e}_r \\ d_1 \\ \vdots \\ d_{s-r}\end{pmatrix}
=\begin{pmatrix} B & D \\ 0 & C\end{pmatrix}A\begin{pmatrix} c_1\\ \vdots \\c_s \end{pmatrix}.
$$
Since both $\widetilde{f}_1,\ldots,\widetilde{f}_r,g_1,\ldots,g_{s-r}$ and $c_1,\ldots,c_s$ are $R_\p$-bases of $R_\p^{\oplus s}$, the determinant of the corresponding change of basis matrix,
$$
\det\left(\begin{pmatrix} B & D \\ 0 & C\end{pmatrix}A\right)
=\det A\cdot\det B\cdot\det C,
$$
is a unit in $R_\p$. Since $\det A=1$ and $\det C$ is a unit in $R_\p$, it follows that $\det B$ is also a unit in $R_\p$.
\end{proof}

We briefly recall the basic facts and conventions concerning the divisor class group that will be used below; see, for example, \cite{Fo2} for further details.
Let $R$ be a Noetherian normal domain with fraction field $K$. For each prime ideal $\p$ of $R$ with $\height\p=1$, the localization $R_\p$ is a discrete valuation ring, and we denote its normalized valuation by $v_\p$. The divisor class group $\operatorname{Cl}(R)$ is the quotient of the free abelian group $\operatorname{Div}(R)=\bigoplus_{\height\p=1}\Z[\p]$ by the subgroup of principal divisors $\operatorname{div}(a)=\sum_{\height\p=1}v_\p(a)[\p]$, where $0\ne a\in K$.
Let $M$ be a finitely generated reflexive $R$-module of rank one. Via the natural map $M\to M\otimes_RK\cong K$, we may regard $M$ as an $R$-submodule of $K$. As $M$ is reflexive, it can be identified with a divisorial ideal, and hence we may associate to it a divisor class $[M]\in\operatorname{Cl}(R)$; see, for example, \cite[Page 26]{Fo2} and \cite[Page 205]{Lip}.
More explicitly, fix a nonzero element $\xi\in M\otimes_RK$, and, for each prime ideal $\p$ of height one, choose $a_\p\in K$ such that $M_\p=R_\p(a_\p\xi)$. Then $[M]$ is defined to be the class of the divisor $\sum_{\height\p=1}v_\p(a_\p)[\p]$. This class is independent of the choice of $\xi$. Moreover, $[M]=0$ in $\operatorname{Cl}(R)$ if and only if $M$ is isomorphic to $R$.

\begin{thm}\label{classgroup}
Let $Q$ be a regular local ring, let $I$ be an ideal of $Q$ of height $g$, and set $R=Q/I$.
If $R$ is a normal domain and $\id_R(I/I^2)<\infty$, then $(g+1)[\omega_R]=0$ in $\operatorname{Cl}(R)$.
Moreover, if $\operatorname{Cl}(R)$ is torsionfree, $R$ is a complete intersection.
\end{thm}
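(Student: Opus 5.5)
The plan is to compute the divisor class of the top exterior power of the conormal module in two ways. One computation uses the fundamental local isomorphism, which ties it to $-[\omega_R]$. The other uses the Foxby decomposition forced by $\id_R(I/I^2)<\infty$ together with Lemma \ref{basislemma}, which ties it to $g[\omega_R]$.

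\emph{Setup.} Since $\id_R(I/I^2)<\infty$ and $I\neq 0$ (if $I=0$ there is nothing to prove), $R$ is Cohen--Macaulay with canonical module $\omega_R\cong\Ext^g_Q(R,Q)$. By Foxby equivalence, $N:=\Hom_R(\omega_R,I/I^2)$ has finite projective dimension and $I/I^2\cong\omega_R\otimes_R N$. Let $\p$ be a prime of $R$ with $\height\p\le 1$. Then $R_\p$ is regular because $R$ is normal. Since $Q_\p$ is regular, $I_\p$ is generated by a regular sequence of length $g$, and so $(I/I^2)_\p\cong R_\p^{\oplus g}$. Consequently $N$ has rank $g$, and $N_\p$ is free, being of finite projective dimension over a DVR. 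For a finitely generated module $M$ that is free of rank $r$ at all primes of height $\le 1$, set $\det M:=(\bigwedge^r M)^{**}$. This is a reflexive rank one module, hence has a class $[\det M]\in\operatorname{Cl}(R)$, and it is computed at height-one primes by generators of $\bigwedge^r M_\p$.

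\emph{Step 1: $[\det N]=0$.} Apply Lemma \ref{basislemma} to $N$ to get a $K$-basis $e_1,\ldots,e_g$ of $N\otimes_R K$. Take $\xi=e_1\wedge\cdots\wedge e_g$. For every height-one $\p$ and every $R_\p$-basis $f$ of $N_\p$, the element $f_1\wedge\cdots\wedge f_g$ equals $\det(B)\,\xi$ with $\det B$ a unit in $R_\p$. So every $a_\p$ may be taken to be $1$, and the divisor is zero.

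\emph{Step 2: $[\det(I/I^2)]=g[\omega_R]$.} At each height-one $\p$ we have $(I/I^2)_\p\cong(\omega_R)_\p\otimes N_\p$, so
\[
\textstyle\bigwedge^g (I/I^2)_\p\cong(\omega_R)_\p^{\otimes g}\otimes\bigwedge^g N_\p .
\]
These identifications are compatible with passage to $K$. Hence the divisors add, giving $[\det(I/I^2)]=g[\omega_R]+[\det N]=g[\omega_R]$.

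\emph{Step 3: $[\det(I/I^2)]=-[\omega_R]$.} Consider the natural pairing $\omega_R\otimes\det(I/I^2)\to\Ext^g_Q(R,Q)\otimes\bigwedge^g(I/I^2)$ and compare it with the fundamental local isomorphism
\[
\Ext^g_{Q_\p}(R_\p,Q_\p)\cong\Hom_{R_\p}\bigl(\textstyle\bigwedge^g(I_\p/I_\p^2),R_\p\bigr),
\]
valid wherever $I_\p$ is a complete intersection. This isomorphism is canonical: it does not depend on the chosen regular sequence, and it is compatible with localization to the generic point. Therefore the reflexive modules $\omega_R$ and $\Hom_R(\det(I/I^2),R)$ agree inside $\omega_R\otimes K$ at every height-one prime. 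Both are reflexive, so they are isomorphic, which gives $[\omega_R]=-[\det(I/I^2)]$.

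Combining Steps 2 and 3 gives $g[\omega_R]=-[\omega_R]$, that is, $(g+1)[\omega_R]=0$. If $\operatorname{Cl}(R)$ is torsionfree, then $[\omega_R]=0$, so $\omega_R\cong R$ and $R$ is Gorenstein. In that case $I/I^2$ has finite projective dimension, and \cite[Theorem 3.1]{B} shows that $I$ is generated by a regular sequence.

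I expect Step 3 to be the main obstacle, since it needs the canonicity of the fundamental local isomorphism. The check I would do first is that changing the regular sequence generating $I_\p$ by an invertible matrix $U$ multiplies the generator of $\Ext^g$ and the generator of $\bigwedge^g$ by the same unit $\det U$ (the latter inverted, via $\Hom$). Once this holds, the local isomorphisms glue to an isomorphism of reflexive modules that is defined in codimension one.
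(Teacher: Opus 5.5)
Your proposal is correct and is essentially the paper's argument in more conceptual form: the paper's explicit Koszul comparison (the morphism $\psi$ acting by $\det A$ in top degree, which gives $(\omega_R)_\p=R_\p(\det A)^{-1}\gamma\xi$) is exactly the canonicity check of the fundamental local isomorphism that you flag in Step~3, and its relation $\det A=(\det A)^{-g}\gamma^g\det B$, with $\det B$ a unit by Lemma~\ref{basislemma}, is your Steps~1--2. One small fix: $N_\p$ is free not because it has finite projective dimension over a DVR (every finitely generated module over a DVR does), but because $(\omega_R)_\p\cong R_\p$ gives $N_\p\cong (I/I^2)_\p\cong R_\p^{\oplus g}$.
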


\begin{proof}
We may assume $I\ne 0$.
The condition $\id_R(I/I^2)<\infty$ shows that $R$ is Cohen--Macaulay.
Let $F$ be a minimal free resolution of $R$ over $Q$, and then $\omega_R:=\H^g(\Hom_Q(F,Q))$ is a canonical module of $R$.
As $\id_R(I/I^2)<\infty$, there is the natural isomorphism $\Phi:\omega_R\otimes_R M \to I/I^2$, where $M=\Hom_R(\omega_R,I/I^2)$ has finite projective dimension and rank $g$.
Since $R$ is an integral domain, $I$ is prime and $K:=Q_I/I Q_I$ is the fraction field of $R$.
Put $(-)_K:=(-)\otimes_R K$.
Let $0\ne \xi\in(\omega_R)_K$ and let $e_1,\ldots,e_g$ be a $K$-basis of $M_K$ as in Lemma \ref{basislemma}.
Applying $(-)_K$ to $\Phi$ induces an isomorphism
$$
\Phi_K: (\omega_R)_K\otimes_K M_K \xrightarrow{\simeq} (\omega_R\otimes_R M)_K \xrightarrow{\simeq} (I/I^2)_K
$$
Let $\boldsymbol{x}=x_1,\ldots, x_g\in IQ_I$ be inverse images of 
$$
\Phi_K(\xi\otimes e_1),\ldots, \Phi_K(\xi\otimes e_g)\in (I/I^2)_K\cong IQ_I/I^2Q_I.
$$
Since $\xi\otimes e_1,\ldots,\xi\otimes e_g$ is a $K$-basis of $(\omega_R)_K\otimes_K M_K$, $\boldsymbol{x}$ generates $IQ_I$.
For the Koszul complex $X:=K(\boldsymbol{x},Q_I)$ over $Q_I$ with respect to $\boldsymbol{x}$, let $\theta_{\boldsymbol{x}}\in \Hom_{Q_I}(X_g, Q_I)$ denote the canonical homomorphism that sends the distinguished exterior product generator to $1$.
The image of $\theta_{\boldsymbol{x}}$ is nonzero in $\H^g(\Hom_{Q_I}(X, Q_I))$. 
Localizing $F$ at $I$, $F_I$ is a free resolution of $K$ over $Q_I$. Choose a comparison map $u:F_I\to X$ lifting $\id_K$. 
Dualizing and taking cohomology gives an isomorphism 
$$
\alpha:=\H^g(\Hom_{Q_I}(u, Q_I)):\H^g(\Hom_{Q_I}(X, Q_I))\xrightarrow{\simeq}
\H^g(\Hom_{Q_I}(F_I,Q_I))=(\omega_R)_K.
$$
Write $\alpha([\theta_{\boldsymbol{x}}])=\gamma\xi$ for some $0\neq\gamma\in K$.

Let $\p\in\spec R$ with $\height(\p)=1$ and let $\q\in\spec Q$ with $\p=\q/I$.
As $R$ is normal, $R_\p=Q_\q/IQ_\q$ is regular.
Also, so is $Q_\q$, which shows that $IQ_\q$ is a complete intersection ideal of grade $g$.
There is $\boldsymbol{y}=y_1,\ldots,y_g\in I$ such that $(y_1,\ldots,y_g)Q_\q=IQ_\q$.
Viewing $\boldsymbol{x}$ and $\boldsymbol{y}$ as column vectors in $Q_I^{\oplus g}$, there is the transition matrix $A$ between two minimal systems of generators of $IQ_I$ with entries in $Q_I$ such that 
\begin{equation}\label{basechange}
\begin{pmatrix} y_1\\ \vdots \\ y_g \end{pmatrix}
=A\begin{pmatrix} x_1\\ \vdots \\ x_g \end{pmatrix}.
\end{equation}
Similarly, for the Koszul complexes $Y:=K(\boldsymbol{y},Q_I)$ and  $Z:=K(\boldsymbol{y},Q_\q)$ over $Q_I$ and $Q_\q$, respectively, let $\theta_{\boldsymbol{y}}\in \Hom_{Q_I}(Y_g, Q_I)$ and $\zeta\in \Hom_{Q_\q}(Z_g, Q_\q)$ denote the canonical homomorphisms that send the distinguished exterior product generator to $1$.
By the definition of $A$, there is a commutative diagram
\[
  \xymatrix@C=35pt@R=20pt
  {
    0 \ar[r]
    & Y_g \ar[r] \ar[d]_{\times \det A}^{\cong} 
    & Y_{g-1} \ar[r] \ar[d]^{\cong} 
    & \cdots \ar[r]
    & Y_1 \ar[r]^{(y_1 \cdots y_g)}  \ar[d]_{A^t}^{\cong} 
    & Y_0 \ar@{=}[d] \ar[r]
    & 0\\
    0 \ar[r]
    & X_g \ar[r]
    & X_{g-1} \ar[r]
    & \cdots \ar[r]
    & X_1 \ar[r]^{(x_1 \cdots x_g)}
    & X_0 \ar[r]
    & 0.\\
  }
\]
Let $\psi:Y\to X$ denote the above morphism of Koszul complexes. Dualizing and taking cohomology gives an isomorphism
$$
\beta:=\H^g(\Hom_{Q_I}(\psi,Q_I)):\H^g(\Hom_{Q_I}(X, Q_I))\xrightarrow{\simeq}\H^g(\Hom_{Q_I}(Y,Q_I)).
$$
Since $\psi_g$ is multiplication by $\det A$, we have $\beta([\theta_{\boldsymbol{x}}])=(\det A)[\theta_{\boldsymbol{y}}]$.
Choose a comparison map $v:F_\q\to Z$ lifting $\id_{R_\p}$, and let
$$
\eta:=\H^g(\Hom_{Q_\q}(v,Q_\q))([\zeta])\in \H^g(\Hom_{Q_\q}(F_\q,Q_\q))=(\omega_R)_\p.
$$
Since $\H^g(\Hom_{Q_\q}(v,Q_\q))$ is an isomorphism and $[\zeta]$ is an $R_\p$-basis of $\H^g(\Hom_{Q_\q}(Z,Q_\q))$, $\eta$ is an $R_\p$-basis of $(\omega_R)_\p$.
After localizing $v$ at $I$, we obtain a comparison map $v_I:F_I\longrightarrow Y$.
Both $u$ and $\psi\circ v_I:F_I\to X$ lift $\id_K$, and hence they are chain homotopic. 
Therefore 
$$
\alpha=\H^g(\Hom_{Q_I}(v_I,Q_I))\circ\beta.
$$
Since $\beta([\theta_{\boldsymbol{x}}])=(\det A)[\theta_{\boldsymbol{y}}]$, it follows that the image of $\eta$ in $(\omega_R)_K$ is
$$
\H^g(\Hom_{Q_I}(v_I,Q_I))([\theta_{\boldsymbol{y}}])
=
(\det A)^{-1}\alpha([\theta_{\boldsymbol{x}}])
=
(\det A)^{-1}\gamma\xi.
$$
For the reader's convenience, we summarize the above discussion in the following commutative diagram:
\[
  \xymatrix@C=35pt@R=30pt
  {
  (\omega_R)_\p=\H^g(\Hom_{Q_\q}(F_\q,Q_\q)) \ar[r]^-{\text{localize}}
  & \H^g(\Hom_{Q_I}(F_I,Q_I))=(\omega_R)_K
  &
  \\
  \H^g(\Hom_{Q_\q}(Z,Q_\q)) \ar[u]^{\H^g(\Hom(v,Q_\q))}_{\cong}\ar[r]^-{\text{localize}}
  & \H^g(\Hom_{Q_I}(Y,Q_I)) \ar[u]^{\H^g(\Hom(v_I,Q_I))}_{\cong}
  & \H^g(\Hom_{Q_I}(X,Q_I)). \ar[l]^{\beta}_{\quad\cong} \ar[ul]_{\alpha}^{\cong\quad} 
  }
\]
In this diagram, the elements we consider correspond as follows:
\[
  \xymatrix@C=90pt@R=20pt
  {
  \eta \ar@{|->}[d] \ar@{|-->}[r]
  & (\operatorname{det}A)^{-1}\gamma \xi
  & 
  \\
  [\zeta] \ar@{|->}[r]
  & [\theta_{\boldsymbol{y}}] \ar@{|->}[r]
  & (\operatorname{det}A)^{-1}[\theta_{\boldsymbol{x}}] \ar@{|->}[ul]
  }
\]
By commutativity, the dashed correspondence is given by localization, which means that 
\begin{equation}\label{localize}
(\omega_R)_\p=R_\p\eta=R_\p((\det A)^{-1}\gamma)\xi\subseteq (\omega_R)_K.
\end{equation}

For the isomorphism
$$
\Phi_\p: (\omega_R)_\p\otimes_{R_\p} M_\p \xrightarrow{\simeq} (\omega_R\otimes_R M)_\p \xrightarrow{\simeq} (I/I^2)_\p
$$
induced by localizing $\Phi$ at $\p$, there exists an $R_\p$-basis $f_1,\ldots,f_g$ of $M_\p$ such that $\Phi_\p(\eta\otimes f_i)=\overline{y_i}$ in $(I/I^2)_\p$.
As $e_1,\ldots,e_g$ were chosen as in Lemma \ref{basislemma}, the determinant of the matrix $B$ appearing in the change of basis 
$$
\begin{pmatrix} f_1\\ \vdots \\ f_g \end{pmatrix}
=B\begin{pmatrix} e_1\\ \vdots \\ e_g \end{pmatrix}
$$
over $K$ is a unit in $R_\p$. 
Noting that $\eta=(\det A)^{-1}\gamma\xi$ in $(\omega_R)_K$, one has
$$
\begin{pmatrix} \eta\otimes f_1\\ \vdots \\ \eta\otimes  f_g \end{pmatrix}
=(\det A)^{-1}\gamma\begin{pmatrix} \xi\otimes f_1\\ \vdots \\ \xi\otimes  f_g \end{pmatrix}
=(\det A)^{-1}\gamma B\begin{pmatrix}  \xi\otimes e_1\\ \vdots \\  \xi\otimes e_g \end{pmatrix}
$$
as a vector in $((\omega_R)_K\otimes_K M_K)^{\oplus g}$.
Applying $\Phi_K$ to both sides yields the change of basis
$$
\begin{pmatrix} \overline{y_1}\\ \vdots \\ \overline{y_g} \end{pmatrix}
=(\det A)^{-1}\gamma B\begin{pmatrix} \overline{x_1}\\ \vdots \\ \overline{x_g} \end{pmatrix}
$$
in $(I/I^2)_K$.
Combining this with \eqref{basechange}, $A$ and $(\det A)^{-1}\gamma B$ are equal as matrices over $K$, and taking determinants yields $\det A=(\det A)^{-g}\gamma^g \det B$.
Since $\det B$ is a unit of $R_\p$, we have $(\det A)^{g+1}R_\p=\gamma^g R_\p$ as an $R_\p$-submodule of $K$.

By \eqref{localize}, the coefficient of $[\omega_R]$ at $\p$ with respect to the fixed basis $\xi$ is $v_\p((\det A)^{-1}\gamma)=v_\p(\gamma)-v_\p(\det A)$.
Since $(\det A)^{g+1}R_\p=\gamma^gR_\p$, we have $(g+1)v_\p(\det A)=g\,v_\p(\gamma)$,
and hence
$$
(g+1)\bigl(v_\p(\gamma)-v_\p(\det A)\bigr)=(g+1)v_\p(\gamma)-(g+1)v_\p(\det A)=(g+1)v_\p(\gamma)-g v_\p(\gamma)=v_\p(\gamma).
$$
Thus $(g+1)$ times a divisor representing $[\omega_R]$ is the principal divisor
$\operatorname{div}(\gamma)$, and hence it is zero in $\operatorname{Cl}(R)$.

Finally, if $\operatorname{Cl}(R)$ is torsionfree, then $[\omega_R]=0$ in $\operatorname{Cl}(R)$.
Hence $R$ is Gorenstein, and thus $\pd_R(I/I^2)<\infty$.
Then $R$ is a complete intersection by \cite[Theorem 3.1]{B}. 
\end{proof}

A related connection between the conormal module and the canonical module was established by Hong, Noh, and Vasconcelos \cite[Proposition 4.4]{HNV} in the affine characteristic-zero setting.

\begin{rem}\label{R3}
The following (standard graded) Cohen--Macaulay normal domains over a field are known to have torsionfree class groups:
\begin{itemize}
\item generic/ladder determinantal rings; see \cite[Corollary 3.5]{SWSS};
\item homogeneous coordinate rings of Schubert varieties in Grassmannians; see \cite[Theorem 8.1]{BV};
\item Pfaffian rings (and, more generally, certain cogenerated Pfaffian rings); see \cite[Theorem 2.5]{Ne};
\item various normal toric rings, including normal edge rings and toric rings 
associated to compressed lattice polytopes, such as Hibi rings and stable set 
rings of perfect graphs; see \cite[Theorem 1.1]{HiMa} and \cite[Corollary 3.4]{Ma}.
\end{itemize}
Thus, by Theorem \ref{classgroup}, for the localizations (and completions) of these rings at their homogeneous maximal ideals, the defining ideal is a complete intersection if and only if the conormal module has finite injective dimension.
Indeed, if the conormal module of the above ring $R$ has finite injective dimension, then by applying the theorem to the localization at the homogeneous maximal ideal $\m$, we obtain $(g+1)[\omega_{R_\m}]=0$ in $\operatorname{Cl}(R_\m)$. From this, one can deduce that $(g+1)[\omega_R]=0$ in $\operatorname{Cl}(R)$.
\end{rem}

Finally, we conclude the paper by considering the case of monomial quotients. The key idea is that, if the conormal module has finite injective dimension, then the Foxby equivalence implies that the monomial ideal can be factored as a product of two monomial ideals. This theorem refines \cite[Corollary 5.7]{A}.

\begin{thm}\label{monomial over k}
Let $Q=k[X_1,\ldots,X_n]$ be a polynomial ring over a field $k$,  let $I$ be a monomial ideal of $Q$, and $R=Q/I$. 
If $\id_R(I/I^2)<\infty$, then $R$ is a complete intersection.
\end{thm}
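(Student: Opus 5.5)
We argue by induction on $\mu(I)$ (or on $n$), and the aim is to show that $R$ is Gorenstein; once that holds, $\pd_R(I/I^2)<\infty$ and \cite[Theorem 3.1]{B} finishes. All constructions are made $\Z^n$-graded. Localizing at the homogeneous maximal ideal $\m=(X_1,\ldots,X_n)$ preserves the hypothesis, and a graded module has finite injective dimension exactly when its localization at $\m$ does. So we work with $^*$-local graded modules over $R$, using the $\Z^n$-graded canonical module $\omega_R$. Since $\id_R(I/I^2)<\infty$, the ring $R$ is Cohen--Macaulay. Foxby equivalence then gives a graded isomorphism
$$I/I^2\cong \omega_R\otimes_R M,\qquad M=\Hom_R(\omega_R,I/I^2),$$
where $M$ is a $\Z^n$-graded module of finite projective dimension.

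The key step is to extract a factorization $I=JK$ from this isomorphism by tracking multidegrees. Every homogeneous component of $R$, $\omega_R$ and $I/I^2$ has $k$-dimension at most one, since each is a subquotient of $Q$ or of the canonical module of a Stanley--Reisner-type (after polarization) ring. Minimal homogeneous generators of $I/I^2$ are the monomial generators $m_1,\ldots,m_\mu$ of $I$. Minimal generators of $\omega_R\otimes M$ are the products $w_i\otimes f_j$ of minimal generators of $\omega_R$ (degrees $a_i$) and of $M$ (degrees $b_j$). Hence the multiset $\{\deg m_l\}$ equals $\{a_i+b_j\}$, and $\mu=\type(R)\,\mu(M)$, as in the proof of Theorem \ref{main}. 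Translating by a suitable shift, we set $J=(X^{a_i})$ and $K=(X^{b_j})$. We then want to show that $I=JK$ as monomial ideals and that this is a nontrivial factorization whenever $R$ is not Gorenstein. To do so, we compare the relations on $I/I^2$ with those of the tensor product: a relation $X^c w_i\otimes f_j = X^{c'} w_{i'}\otimes f_{j'}$ in the tensor product must match the relation $X^{c}m_l=X^{c'}m_{l'}$ modulo $I^2$.

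Next, we use the factorization $I=JK$ with $\mu(J)=\type(R)\ge 2$ to reach a contradiction. For monomial ideals, $\mu(JK)=\mu(J)\mu(K)$ forces all products $X^{a_i+b_j}$ to be minimal generators. Taking the gcd-part, $I$ is contained in $\gcd(J)\gcd(K)\cdot(\ldots)$. Since $R$ is Cohen--Macaulay, $I$ is unmixed, so we can reduce to the case where $I$ has no common monomial factor; otherwise $I=X^uI'$ with $\height I=1$, hence $I$ is principal, as in Remark \ref{R1}. I would then show that an associated prime of $I$, which is generated by variables, contains both $J$ and $K$ in a way that makes $\height(I)$ too small relative to $\mu$. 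Alternatively, I would localize at a minimal prime $P=(X_{i_1},\ldots,X_{i_g})$: there $I_P$ is primary to a variable ideal, and $R_P$ is Artinian with $I_P/I_P^2\cong\omega^{\oplus a}$. This lets us induct on $n$, since inverting the other variables yields a monomial ideal in fewer variables. By induction $I_P$ is generated by powers of the variables, so $R$ is generically Gorenstein, and the rank count gives $\mu(M)=\operatorname{rank} M= g$ on each component. Combining $\mu=\type(R)\cdot g$ with $I=JK$ and unmixedness should force $\type(R)=1$.

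I expect the main obstacle to be the base case of the induction, where $R$ is Artinian and $I$ is $\m$-primary. There I would use the factorization $I=JK$ directly. Since $I$ contains pure powers $X_i^{e_i}$ and each such power must be a product $X^{a}X^{b}$ with $X^a\in J$ and $X^b\in K$, both $J$ and $K$ are $\m$-primary (up to the shift). Then $\mu(I)=\mu(J)\mu(K)$ combined with $\mu(J),\mu(K)\ge n$ is incompatible with the pure powers unless $n=1$ or $\type(R)=1$. The multidegree bookkeeping that justifies $I=JK$ exactly, rather than only an equality of degree multisets, also needs care.
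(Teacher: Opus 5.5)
Your first step is the same as the paper's. Foxby equivalence gives $I/I^2\cong\omega_R\otimes_R M$ as $\Z^{\oplus n}$-graded modules, so $\mu(I)=rs$ with $r=\type(R)$ and $s=\mu(M)$, and the exponents of the minimal monomial generators of $I$ are exactly the sums $\boldsymbol a_i+\boldsymbol b_j$. Two small corrections:
\begin{enumerate}
\item $I=JK$ needs no comparison of relations. If you shift by $\boldsymbol c$, the componentwise minimum of the $\boldsymbol a_i$, both families of exponents become nonnegative, and $JK$ is generated by the same monomials $X^{\boldsymbol a_i+\boldsymbol b_j}$.
\item The claim that every multigraded component of $\omega_R$ is at most one-dimensional is false: for $k[x,y,z]/(xy,xz,yz)$ the component $(\omega_R)_{\boldsymbol 0}$ is two-dimensional. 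You do not need this claim, however.
\end{enumerate}

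\textbf{Artinian case.} The counting argument you propose cannot work. Both $(x,y)(x^2,y^2)=(x,y)^3$ and $(x,y,z)(x^2,y^2,z^2)$ are products of $\m$-primary monomial ideals that contain pure powers of every variable, with $\mu(JK)=\mu(J)\mu(K)$ and $\mu(J),\mu(K)\ge n$. So you must use how $J$ is tied to $R$, not just its shape.

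The paper does this through graded Matlis duality, which gives $\operatorname{Soc}R=(X^{-\boldsymbol a_1},\ldots,X^{-\boldsymbol a_r})R$. Let $p_h$ and $q_h$ be the least exponents with $X_h^{p_h}\in J$ and $X_h^{q_h}\in K$.
\begin{enumerate}
\item $1\le p_h\le -c_h$, because some $\boldsymbol a_i-\boldsymbol c$ equals $p_h\boldsymbol e_h$.
\item $q_h\le -c_h$, because $-c_h$ is the largest $X_h$-exponent occurring in the socle, so $X_h^{-c_h+1}\in I=JK$.
\end{enumerate}
For $n\ge 3$, the element $X_1^{p_1}X_2^{q_2}\in I$ then divides the socle generator $X^{-\boldsymbol c-p_3\boldsymbol e_3}$, a contradiction. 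The case $n=2$ follows from Hilbert--Burch ($rs=r+1$).

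Your argument also silently assumes $K\ne Q$. If $s=1$, then $K$ is principal and contains the $\m$-primary ideal $I$, which forces $K=Q$ when $n\ge 2$. The paper's argument does not need $K$ to be proper.

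\textbf{General case.} You correctly obtain that $I$ is generically a complete intersection by applying the Artinian case at the minimal primes. However, the concluding step (``should force $\type(R)=1$'') is not an argument. Also, $\mu(M)=g$ is wrong: you only know $s=\mu(M)\ge\rank M=g$.

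Factorization plus unmixedness cannot suffice. In $k[x,y,z,w]$, $(x,y)(z,w)=(x,y)\cap(z,w)$ is an unmixed product with $\mu=2\cdot 2$. What excludes it is Cohen--Macaulayness, which enters through projective dimension.

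The missing idea is Tor-independence of $Q/J$ and $Q/K$:
\begin{enumerate}
\item At each minimal prime $\p$ of $I$, $I_\p$ is a complete intersection. So one of $J_\p$, $K_\p$ is the unit ideal by \cite[Proposition 1]{H3}.
\item Hence the ideal $(J\cap K)/JK$ of $R$ has support disjoint from $\Ass R$, and therefore vanishes. Thus $\Tor_1^Q(Q/J,Q/K)=0$.
\item Rigidity over the regular ring $Q$ kills all higher Tor. So $I=JK\cong J\otimes_Q K$.
\item Tensoring minimal resolutions gives $\pd_Q(Q/J)+\pd_Q(Q/K)=\pd_Q(Q/I)+1=g+1$.
\item $J$ and $K$ contain $I$ and are proper, using $r\ge 2$ and $s\ge g\ge 2$. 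So each satisfies $\pd_Q(Q/-)\ge\height(-)\ge g$.
\end{enumerate}
This gives $2g\le g+1$, which is impossible for $g\ge 2$. The cases $g\le 1$ are immediate from unmixedness.
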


\begin{proof}
If $I=0$, there is nothing to prove. Hence we may assume that $I\ne0$.
Let $\n=(X_1,\ldots,X_n)$ be the maximal ideal of $Q$. 
As in, for example, \cite[Proposition 1.5.15]{BH}, the relevant information about the graded ring $Q$ and graded $Q$-modules can be recovered from their localizations at $\n$. We will use this fact freely throughout the proof; in particular, whenever convenient, we work over $Q_\n$ and then translate the resulting information back to $Q$ and its graded modules.
The proof is divided into two steps, the Artinian case and the general case. We first record a fact that will be used in both steps.

Let $\boldsymbol{e}_1,\ldots, \boldsymbol{e}_n\in\Z^{\oplus n}$ be the standard basis.
We set $\deg X_i=\boldsymbol{e}_i$ and regard $Q$ and $R$ as $\mathbb{N}^{\oplus n}$-graded rings.
Since $\id_R(I/I^2)<\infty$, $R$ is Cohen--Macaulay, and thus $I$ is unmixed.
Let $\omega_R$ be the graded canonical module.
The natural homomorphism $\omega_R\otimes_R \Hom_R(\omega_R,I/I^2)\to I/I^2$ of $\Z^{\oplus n}$-graded modules is an isomorphism.
For minimal homogeneous systems of generators $\alpha_1,\ldots, \alpha_r$ and $\beta_1,\ldots,\beta_s$ of $\omega_R$ and $\Hom_R(\omega_R,I/I^2)$, respectively, one has
$r=\type(R)$ and $\{\alpha_i\otimes \beta_j \mid 1\le i\le r, 1\le j\le s\}$ is a minimal homogeneous system of generators $\omega_R\otimes_R \Hom_R(\omega_R,I/I^2)$.
In particular, $\mu_Q(I)=\mu_R(I/I^2)=rs$ holds.
For $\boldsymbol{a}_i=(a_{i1},\ldots,a_{in}):=\deg \alpha_i,\ \boldsymbol{b}_j=(b_{j1},\ldots,b_{jn}):=\deg \beta_j\in\Z^{\oplus n}$, we have $\deg(\alpha_i\otimes \beta_j)=\boldsymbol{a}_i+\boldsymbol{b}_j$ and hence $I$ is generated by the monomials 
$$
\{X^{\boldsymbol{a}_i+\boldsymbol{b}_j}:=\prod_{h=1}^n X_h^{a_{ih}+b_{jh}} \mid 1\le i\le r, 1\le j\le s\}.
$$
In particular, $a_{ih}+b_{jh}\ge 0$ for all $1\le i\le r$, $1\le j\le s$, and $1\le h\le n$.
If we set $c_h:=\min\{a_{ih}\mid 1\le i\le r\}$ and $\boldsymbol{c}=(c_1,\ldots,c_n)$, then $a_{ih}-c_h\ge 0$.
On the other hand, for each $h$, there exists some $i$ such that $a_{ih}=c_h$, which implies that $b_{jh}+c_h=b_{jh}+a_{ih}\ge 0$ for every $j$.
We see that $\boldsymbol{a}_i-\boldsymbol{c}, \boldsymbol{b}_j+\boldsymbol{c}\in\mathbb{N}^{\oplus n}$ and $I=JK$ for monomial ideals
\begin{equation}\label{J and K}
J=(X^{\boldsymbol{a}_i-\boldsymbol{c}} \mid 1\le i\le r) \quad \text{and} \quad K=(X^{\boldsymbol{b}_j+\boldsymbol{c}} \mid 1\le j\le s).
\end{equation}

First, we assume that $R$ is Artinian.
The assertion is clear when $n=1$. 
If $n=2$, then by the Hilbert--Burch theorem, we get $rs=\mu(I)=\type(R)+1=r+1$, and thus $r(s-1)=1$, which forces $r=1$. 
Therefore, we may assume that $n\ge 3$.
As discussed before, it suffices to show that $R$ is Gorenstein.
Suppose that $r\ge 2$ for contradiction.
By the graded Matlis duality, one has $\Hom_k (\operatorname{Soc}R, k)\cong \omega_R/\n \omega_R$ as graded $R$-modules.
Since $\boldsymbol{a}_1,\ldots,\boldsymbol{a}_r$ are the degrees of a minimal homogeneous system of generators of $\omega_R$, those for $\operatorname{Soc}R$ are their additive inverses, that is, 
\begin{equation}\label{generate Soc}
\operatorname{Soc}R=(X^{-\boldsymbol{a}_1},\ldots,X^{-\boldsymbol{a}_r})R.
\end{equation}

We fix $1\le h\le n$.
Since both $J$ and $K$ contain the $\n$-primary ideal $I$, we may define the following as nonnegative integers:
$$
p_h:=\min\{ p\ge 0 \mid X_h^p\in J\}\quad \text{and}\quad q_h:=\min\{ q\ge 0 \mid X_h^q\in K\}
$$
Since $r>1$, we have $\mu(I)=rs>s\ge \mu(K)$, and hence $I=JK\ne K$. 
So $J$ is a proper ideal, and $p_h$ is positive.
The fact that $X_h^{p_h}=X^{p_h\boldsymbol{e}_h}$ belongs to the monomial ideal $J$ implies that among the monomial generators of $J$ there is one that is a power of $X_h$, and it must divide $X_h^{p_h}$. 
By the minimality of $p_h$ and \eqref{J and K}, there is $1\le i\le r$ such that $\boldsymbol{a}_i-\boldsymbol{c}=p_h\boldsymbol{e}_h$.
By \eqref{generate Soc}, every component of $-\boldsymbol{a}_i$ is nonnegative, so
\begin{equation}\label{ineqp}
1\le p_h\le p_h-a_{ih}= -c_h.
\end{equation}
We show $X_h^{-c_h+1}\in I$.
Otherwise, $X_h^{-c_h+1} R\ne 0$.
If $X_h^{-c_h+1}R$ is not contained in $\operatorname{Soc}R$, then there exists a variable whose product with $X_h^{-c_h+1}R$ is nonzero. Repeating this process, we obtain $0\ne (X_h^{-c_h+1}f)R\subseteq \operatorname{Soc}R$ for some monomial $f$.
However, $-c_h=-\min\{a_{ih}\mid 1\le i\le r\}=\max\{-a_{ih}\mid 1\le i\le r\}$ is the largest exponent of $X_h$ among the monomials appearing in $\operatorname{Soc}R$.
This is a contradiction, and hence $X_h^{-c_h+1}\in I$.
We see that 
$$
p_h+q_h=\min\{t\ge 0 \mid X_h^t\in JK=I\}\le -c_h+1,
$$
and thus $q_h\le -c_h$ by \eqref{ineqp}.
Since $n\ge 3$, we may apply the above argument with $h=1,2,3$. 
As $p_1\le -c_1$ and $q_2\le -c_2$, the monomial $X_1^{p_1}X_2^{q_2}\in JK=I$ divides $X^{-\boldsymbol{c}-p_3\boldsymbol{e}_3}$, which implies that $X^{-\boldsymbol{c}-p_3\boldsymbol{e}_3}\in I$. 
Then the image of $X^{-\boldsymbol{c}-p_3\boldsymbol{e}_3}$ in $R$ is zero.
This contradicts the fact that the image of $X^{-\boldsymbol{c}-p_3\boldsymbol{e}_3}$ is one of the minimal monomial generators of $\operatorname{Soc}R$.

Next, we consider the general case.
Let $g=\height(I)$.
The case $g=0$ is trivial, while if $g=1$, every unmixed monomial ideal of height $1$ is principal, so the conclusion follows.
We may assume $g\ge 2$.
We see that $R$ is generically a complete intersection.
Indeed, since $I$ is a monomial ideal, every minimal prime $\p$ of $I$ is of the form $\p=(X_1,\ldots, X_g)$ after a suitable permutation of the variables.
Hence we have
$$
Q_\p\cong L[X_1,\ldots, X_g]_{(X_1,\ldots, X_g)}, \ \ \text{where} \ \ L=k(X_{g+1},\ldots, X_n).
$$
By the Artinian case, $Q_\p/IQ_\p$ is a complete intersection, and thus $IQ_\p=(X_1^{u_1},\ldots, X_g^{u_g})Q_\p$ for some positive integers $u_1,\ldots, u_g$.
Moreover, in this case, either $JQ_\p$ or $KQ_\p$ is the unit ideal by \cite[Proposition 1]{H3}, and hence $(J\cap K)_\p=I_\p=(JK)_\p$.

Since $R$ is generically a complete intersection, $\Hom_R(\omega_R,I/I^2)$ has rank $g$, and thus  
$$
s=\mu(\Hom_R(\omega_R,I/I^2))\ge g\ge 2.
$$
Here again, it suffices to assume that $r\ge 2$ and derive a contradiction.
Then since both $r$ and $s$ are greater than $1$, each is strictly less than $rs$.
By the same argument as in the Artinian case, this implies that both $J$ and $K$ are proper.
The previous paragraph shows that $\Supp((J\cap K)/JK)\cap\Ass(R)=\emptyset$.
On the other hand, since $(J\cap K)/JK$ is an ideal of $R$, we have $\Ass((J\cap K)/JK)\subseteq \Ass(R)$, which shows that 
$$
\Tor_1^Q(Q/J,Q/K)\cong (J\cap K)/JK=0.
$$
Then $\Tor_i^Q(Q/J,Q/K)=0$ for all $i>0$ by the rigidity for regular rings; see \cite[Corollary 1]{L}.
Therefore, $J\otimes_Q K\cong JK$ by the exact sequence
$$
0=\Tor_2^Q(Q/J,Q/K)\cong \Tor_1^Q(J,Q/K) \to J\otimes_Q K \to J \to J\otimes_Q (Q/K)\cong J/JK \to 0.
$$
If $F$ and $G$ are minimal free resolutions of $J$ and $K$ over $Q$, respectively, then $F\otimes_Q G$ is a minimal free resolution of $J\otimes_Q K\cong JK=I$. Consequently, $\pd_Q J+\pd_Q K=\pd_Q I$ and thus
$$
\pd_Q (Q/J)+\pd_Q (Q/K)=(\pd_Q J+1)+(\pd_Q K+1)=\pd_Q J+\pd_Q K+2=\pd_Q I+2=\pd_Q (Q/I)+1.
$$
In general, for any homogeneous ideal $H$, the Auslander--Buchsbaum formula gives 
$$
\pd_Q(Q/H)=n-\depth(Q/H)\ge n-\dim(Q/H)=\height(H).
$$
As $R=Q/I$ is Cohen--Macaulay, it follows that
$$
2g\le \height(J)+\height(K)\le \pd_Q (Q/J)+\pd_Q (Q/K)=\pd_Q (Q/I)+1=\height(I)+1=g+1.
$$
This contradicts $g\ge 2$. This completes the proof.
\end{proof}

\begin{cor}\label{base change}
Let $Q=S[X_1,\ldots,X_n]$ be a polynomial ring over a reduced ring $S$,  let $I$ be a monomial ideal of $Q$, and $R=Q/I$. 
If $\id_R(I/I^2)<\infty$, then $I$ is a complete intersection.
\end{cor}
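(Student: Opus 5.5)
Since $S$ is reduced, the plan is to reduce to the case of a polynomial ring over a field, where Theorem \ref{monomial over k} applies. Write $I=(m_1,\ldots,m_t)$ with $m_i$ monomials in $X_1,\ldots,X_n$; monomial ideals are generated by monomials with coefficient $1$. Set $I_0=(m_1,\ldots,m_t)\subseteq \Z[X_1,\ldots,X_n]$ and $R_0=\Z[X]/I_0$. Then $R=R_0\otimes_\Z S$ and $I/I^2\cong (I_0/I_0^2)\otimes_\Z S$. Every quotient $\Z[X]/(\text{monomials})$ is a free $\Z$-module with basis the standard monomials, so these base changes are exact and behave well under any ring map $S\to L$.

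The goal is to show that the minimal monomial generators $m_1,\ldots,m_t$ of $I$ form a regular sequence. For monomials this is equivalent to their being pairwise coprime, i.e.\ having pairwise disjoint supports. This is a purely combinatorial condition, independent of the coefficient ring, and it implies regularity over any ring $S$ (tensor the Koszul complex over $\Z$, which is exact and free, with $S$). Hence it suffices to find one field $L$ and one ring map $S\to L$ such that $\id_{R\otimes_S L}(IL[X]/(IL[X])^2)<\infty$. Theorem \ref{monomial over k} then shows that $IL[X]$ is a complete intersection. For a monomial ideal over a field this forces the minimal monomial generators to be pairwise coprime (a standard fact: a monomial complete intersection is generated by pure-power-type monomials in disjoint variables). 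The minimal monomial generators of $IL[X]$ and of $I$ coincide, so we are done.

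To choose $L$, localize at a suitable prime. The statement tacitly requires a local or graded setting, so we take $S$ Noetherian with $R$ local at a maximal ideal $\mathfrak{M}$ containing $X_1,\ldots,X_n$, lying over $\mathfrak{m}\subset S$. Pick a minimal prime $\p$ of $S$ contained in $\mathfrak m$. Then $S_\p$ is a field because $S$ is reduced, and $R_{\mathfrak M}\to (R\otimes_S S_\p)$ is a localization. Injective dimension does not increase under localization, so the conormal module of $IS_\p[X]$ has finite injective dimension over $R\otimes_S S_\p$ localized at the primes above $\p$. In particular this holds at the prime $\p S_\p[X]+(X_1,\ldots,X_n)$, which is the homogeneous maximal ideal of $S_\p[X]$ and is contained in $\mathfrak M$ after localization. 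By the remark in the proof of Theorem \ref{monomial over k}, finiteness at the homogeneous maximal ideal is what that proof uses (via \cite[Proposition 1.5.15]{BH}). So we may apply the theorem with $k=S_\p$.

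I expect the main obstacle to be the bookkeeping in this localization step: checking that the relevant prime lies under $\mathfrak M$, so that the localization of $R_{\mathfrak M}$ really contains the homogeneous maximal ideal of $S_\p[X]$. The reducedness of $S$ enters exactly here, since it is what makes $S_\p$ a field. The remaining steps, the coprimality criterion for monomial regular sequences and the field-independence of minimal generators, are routine.
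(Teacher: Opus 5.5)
Your proposal is correct and follows the paper's own route: localize at a minimal prime $\p$ of $S$ so that $S_\p$ is a field, apply Theorem~\ref{monomial over k} to $IS_\p[X_1,\ldots,X_n]$, and transfer the disjoint-support property of the minimal monomial generators back to $I$. The $\Z$-model and the passage through a maximal ideal $\mathfrak{M}$ are unnecessary, since the hypothesis is global and $(I/I^2)_\p$ already has finite injective dimension over $R\otimes_S S_\p=S_\p[X_1,\ldots,X_n]/IS_\p[X_1,\ldots,X_n]$, to which the theorem applies as stated; also note that $R_{\mathfrak{M}}\to R\otimes_S S_\p$ is not literally a localization map, and what you actually use is that both rings localize to $R_{\mathfrak{P}}$ with $\mathfrak{P}=\p S[X_1,\ldots,X_n]+(X_1,\ldots,X_n)$.
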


\begin{proof}
Let $\p$ be a minimal prime ideal of $S$. Then $Q_\p=S_\p[X_1,\ldots,X_n]$ is a polynomial ring over a field $S_\p$, and $IQ_\p/(IQ_\p)^2\cong(I/I^2)_\p$ has finite injective dimension. By Theorem \ref{monomial over k}, $IQ_\p$ is generated by a regular sequence. Since $IQ_\p$ is a monomial ideal of $Q_\p$, no variable occurs in two distinct minimal generators; see \cite[Remark 3.3.7]{OW} for instance.
Since $I$ and $IQ_\p$ are generated by the same monomials, this property also holds for $I$. Hence the minimal generators of $I$ clearly form a $Q$-regular sequence.
\end{proof}

\begin{cor}
Let $(Q,\n,k)$ be a Noetherian local ring, let $x_1,\ldots,x_n$ be a $Q$-regular sequence, let $I$ be a nonzero proper ideal of $Q$ generated by monomials in  $x_1,\ldots,x_n$, and set $R=Q/I$.
If $\id_R(I/I^2)<\infty$, then $Q$ and $R$ are Gorenstein and $I$ is a complete intersection.
\end{cor}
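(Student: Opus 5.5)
The plan is to reduce to the polynomial ring case, Corollary \ref{base change}, by a flat base change. Let $P=k'[X_1,\ldots,X_n]$ with $k'$ a field; the natural map $\mathbb{Z}[X_1,\ldots,X_n]\to Q$, $X_i\mapsto x_i$, would be the obvious choice, but it is not flat in general. So instead I would use the standard fact that a map $\phi\colon \mathbb{Z}[X_1,\ldots,X_n]\to Q$ sending the $X_i$ to a $Q$-regular sequence is flat. For a Noetherian local $Q$, this follows from the local criterion of flatness together with the regularity of $x_1,\ldots,x_n$: $\Tor_1$ against the Koszul complex vanishes, so $Q$ is flat over the polynomial ring localized suitably.

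Write $I=I_0Q$, where $I_0\subseteq P_0:=\mathbb{Z}[X_1,\ldots,X_n]$ is the monomial ideal generated by the corresponding monomials. Flatness gives $I/I^2\cong (I_0/I_0^2)\otimes_{P_0}Q$ and $R\cong (P_0/I_0)\otimes_{P_0} Q$. I would then localize $P_0$ at the contraction $\mathfrak{P}$ of $\n$, getting a flat local homomorphism $A:=(P_0/I_0)_{\mathfrak{P}}\to R$, with $I/I^2$ the base change of the $A$-module $C_0:=(I_0/I_0^2)_{\mathfrak{P}}$.

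The key step is to descend finite injective dimension along this flat local map. If $\id_R(C_0\otimes_A R)<\infty$, then by the flat base change theorem of Foxby--Avramov (for a flat local map, $\id_R(M\otimes_A R)<\infty$ iff $\id_A M<\infty$ and the closed fiber is Gorenstein), $\id_A C_0<\infty$ and the fiber $R/\mathfrak{P}R$ is Gorenstein. This descent is the step I expect to be the main obstacle: one must check that the cited theorem applies to finitely generated modules with no extra hypotheses. Its fiber conclusion also gives the Gorenstein assertion later.

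Granting the descent, $\id_A C_0<\infty$. Since $\mathbb{Z}$ is reduced, Corollary \ref{base change} (applied after localizing, or directly to the graded ring, using that finite injective dimension of the homogeneous module is detected at the homogeneous localization) shows that $I_0$ is generated by monomials that pairwise share no variable. Their images in $Q$ are then monomials in disjoint subsets of a $Q$-regular sequence, hence form a $Q$-regular sequence, so $I$ is a complete intersection. Consequently $I/I^2$ is free and nonzero. Since it has finite injective dimension, $R$ is Gorenstein, and $Q$ is Gorenstein because $R=Q/I$ with $I$ generated by a regular sequence. If the descent is too delicate, the fallback is to show directly that $R$ is Gorenstein via an analogue of Lemma \ref{Ass tensor} and then invoke \cite[Theorem 3.1]{B}.
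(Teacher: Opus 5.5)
There is a genuine gap at your first step: the map $\phi\colon\mathbb{Z}[X_1,\ldots,X_n]\to Q$, $X_i\mapsto x_i$, is not flat in general, so the ``standard fact'' you invoke is false.

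If $Q$ has prime characteristic $p$ (say $Q=k[[x_1,\ldots,x_n]]$ with $\operatorname{char}k=p$), then $p$ is a nonzerodivisor on $\mathbb{Z}[X_1,\ldots,X_n]$ and on its localization at $\mathfrak{P}$. But $p$ kills $Q$, so $Q$ is not even torsionfree. Your Koszul computation only gives $\Tor_1^{\mathbb{Z}[X]}(\mathbb{Z}[X]/(X),Q)=\H_1(x_1,\ldots,x_n;Q)=0$. The local flatness criterion relative to $(X_1,\ldots,X_n)$ also requires $Q/(x_1,\ldots,x_n)Q$ to be flat, i.e.\ torsionfree, over $\mathbb{Z}[X]/(X)=\mathbb{Z}$. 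When the residue characteristic is $p>0$, this amounts to $p,x_1,\ldots,x_n$ being $Q$-regular.

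In equal characteristic $p$ you could patch this by using $\mathbb{F}_p$ instead of $\mathbb{Z}$. In mixed characteristic with $p$ a zerodivisor on $Q/(x)Q$, however, no map of this shape is flat. For instance, take $Q=\mathbb{Z}_p[[s,t]]/(ps)$, with $\mathbb{Z}_p$ the $p$-adic integers, and $x_1=t$. So the isomorphism $I/I^2\cong C_0\otimes_A R$, the flat local map $A\to R$, and everything after them are unavailable.

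The step you flagged is not the problem. For a flat local homomorphism and a finitely generated module, Bass numbers multiply, so Foxby's descent does hold. Your fallback is not an argument as it stands: Lemma~\ref{Ass tensor} needs $R$ generically Gorenstein, and you have no way to verify that here.

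The missing idea is to deform $R$ over $Q$ rather than realize it as a flat base change of a polynomial-ring quotient. The paper sets $B=Q[X_1,\ldots,X_n]_{(\n,X_1,\ldots,X_n)}$, lets $J$ be the monomial ideal of $B$ corresponding to $I$, and puts $A=B/J$ and $C=J/J^2$. The steps are then:
\begin{enumerate}
\item $A$ and $B/J^2$ are flat over $Q$, since monomial quotients are $Q$-free.
\item $X_1-x_1,\ldots,X_n-x_n$ is regular on $A$, $B/J^2$ and $C$, with $A/(X-x)A\cong R$ and $C/(X-x)C\cong I/I^2$. Hence finite injective dimension lifts to $C$ over $A$.
\item Injective $A$-modules are $Q$-injective, so $\id_Q C<\infty$. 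Faithful flatness of $C$ over $Q$ then makes boundedness of $\rhom_Q(k,C)\simeq\rhom_Q(k,Q)\otimes_Q C$ force $Q$ to be Gorenstein.
\item That gives $\rhom_A(k,C)\simeq\rhom_{A/\n A}(k,C/\n C)[-d]$. So the closed fibre $(K/K^2)_{(X_1,\ldots,X_n)}$ has finite injective dimension over $(k[X_1,\ldots,X_n]/K)_{(X_1,\ldots,X_n)}$, and Theorem~\ref{monomial over k} applies.
\end{enumerate}
Your end game is fine once such a reduction is in place: monomials with pairwise disjoint supports in a regular sequence form a regular sequence, hence $R$ is Gorenstein, and then so is $Q$.
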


\begin{proof}
Let $B=Q[X_1,\ldots,X_n]_{(\n, X_1,\ldots,X_n)}$ be a local ring, let $J\subseteq B$ be the monomial ideal corresponding to $I$, and set $A=B/J$ and $C=J/J^2$.
For every monomial ideal $H$ of $Q[X_1,\ldots,X_n]$, we show inductively that $X_1-x_1,\ldots, X_n-x_n$ is a regular sequence over $Q[X_1,\ldots,X_n]/H$.
After factoring out $X_1-x_1,\ldots, X_{i-1}-x_{i-1}$, the quotient ring is $Q[X_i,\ldots,X_n]/H_i$, where $H_i$ is a monomial ideal in $x_1,\ldots,x_{i-1},X_i,\ldots,X_n$. 
Since none of its monomial generators involves $x_i$, applying \cite[Corollary 1]{KS} to the permutable regular sequence $x_1,\ldots,x_i,X_i,\ldots,X_n$ on $Q[X_i,\ldots,X_n]$ gives $H_i:x_i=H_i$.
Thus $x_i$ is regular on $Q[X_i,\ldots,X_n]/H_i$. Giving $X_i$ degree $1$ and all other elements degree $0$, and comparing the lowest-degree terms shows immediately that $X_i-x_i$ is regular on $Q[X_i,\ldots,X_n]/H_i$.
In particular, $X_1-x_1,\ldots, X_n-x_n$ is a regular sequence over $A$ and $B/J^2$. 
Hence it is also a regular sequence on $C$ by the exact sequence $0\to C\to B/J^2\to A\to 0$. One has 
$$
A/(X_1-x_1,\ldots, X_n-x_n)\cong Q/I=R, \quad
C\otimes_A (A/(X_1-x_1,\ldots, X_n-x_n))\cong I/I^2.
$$
Hence $\id_A(C)<\infty$. Every injective $A$-module $E$ satisfies $\Hom_Q(-,E)\cong \Hom_A(A\otimes_Q(-), E)$. As $A$ is flat over $Q$, this functor is exact, which means that $E$ is injective over $Q$. By this, we have $\id_Q(C)\le\id_A(C)<\infty$.
On the other hand, since $A$ and $B/J^2$ are flat over $Q$, so is $C$.
Moreover, $C$ is faithfully flat over $Q$. Indeed, since $C$ is a nonzero finitely generated $A$-module, $C/\n C\ne 0$ by Nakayama's lemma.
We have
$$
\rhom_Q(k,C)\simeq \rhom_Q(k,Q)\otimes_Q^\mathbf{L} C=\rhom_Q(k,Q)\otimes_Q C
$$
and it is bounded as $\id_Q(C)<\infty$. Since $C$ is faithfully flat, $\rhom_Q(k,Q)$ is also bounded, which implies that $Q$ is Gorenstein.
We see that 
$$
\rhom_A(A/\n A, C)\simeq\rhom_A(k\otimes_Q^{\mathbf{L}} A, C)\simeq \rhom_Q(k,C)\simeq \rhom_Q(k,Q)\otimes_Q C\simeq k[-d]\otimes_Q C,
$$
where $d=\dim Q$, and 
$$
\rhom_A(k, C)\simeq\rhom_{A/\n A}(k, \rhom_A(A/\n A, C))\simeq\rhom_{A/\n A}(k, C/\n C)[-d],
$$
which is bounded. 
Let $K$ be the monomial ideal of $k[X_1,\ldots,X_n]$ corresponding to $J$.
Then $C/\n C \cong (K/K^2)_{(X_1,\ldots,X_n)}$ has finite injective dimension over $A/\n A\cong (k[X_1,\ldots,X_n]/K)_{(X_1,\ldots,X_n)}$.
Viewing this in terms of localization at the homogeneous maximal ideal of the positively graded ring, this is equivalent to saying that $K/K^2$ has finite injective dimension over $k[X_1,\ldots,X_n]/K$.
By Theorem \ref{monomial over k}, $K$ is generated by a regular sequence. Since $K$ is a monomial ideal, no variable occurs in two distinct minimal generators; see \cite[Remark 3.3.7]{OW} for instance.
This property also holds for $I$. 
As $x_1,\ldots,x_n$ is a $Q$-regular sequence, the minimal generators of $I$ form a $Q$-regular sequence; see \cite[Lemma 2]{KS} for instance.
Since $Q$ is Gorenstein, so is $R$.
\end{proof}

\begin{ac}
Part of this paper was done during Kimura's visit to the Florida A\&M University in June, 2026. He is grateful to the Department of Mathematics for their hospitality and to Ryo Takahashi for supporting his travel expenses for the visit.
He also thanks Siva Somasundaram for sharing helpful materials on linkage theory.
The authors are indebted to Mohsen Asgharzadeh for proposing Corollary \ref{base change} and for kindly agreeing to its inclusion in this paper.
ChatGPT (OpenAI ChatGPT 5.6 Sol in September 2026) assisted the authors in exploring ideas and organizing some of the arguments during the preparation of this paper.
\end{ac}


\end{document}